\definecolor{darkgreen}{rgb}{0,.5,0.2}
\DeclareMathOperator{\dist}{dist}
\newcommand{\eqdef}{\mathbin{\raisebox{0.4pt}{\ensuremath{:}}\hspace*{-1mm}=}}
\newcommand{\bC}{\mathbb C}
\newcommand{\bT}{\mathbb T}
\newcommand{\bZ}{\mathbb Z}
\newcommand{\bN}{\mathbb N}
\newcommand{\bR}{\mathbb R}
\newcommand{\eps}{\varepsilon}
\newcommand{\al}{\alpha}
\newcommand{\be}{\beta}
\newcommand{\de}{\delta}
\newcommand{\nf}{\infty}
\newcommand{\la}{\lambda}
\newcommand{\ph}{\varphi}
\newcommand{\si}{\sigma}
\newcommand{\tht}{\theta}
\newcommand{\Range}{\mathcal{R}}
\newcommand{\supp}{\operatorname{supp}}
\newcommand{\cF}{\mathcal{F}}
\newcommand{\cN}{\mathcal{N}}
\newcommand{\AbstractProb}{\mu} 
\newcommand{\LimProb}{\Lambda} 
\newcommand{\prob}{\mu} 
\newcommand{\LimFunction}{X} 
\newcommand{\Domain}{\Omega}
\newcommand{\ProbInDomain}{\operatorname{P}}
\newcommand{\tuplesize}{d}
\newtheorem{theorem}{Theorem}[section]
\newtheorem{corollary}[theorem]{Corollary}
\newtheorem{proposition}[theorem]{Proposition}
\newtheorem{example}[theorem]{Example}
\newcommand{\vsk}{\vspace{1mm}}
\newcommand{\vsg}{\vspace{2mm}}
\begin{document}

{\Large \bf From convergence in distribution to uniform \\ convergence}

\vspace{4mm}
{\large \bf J. M. Bogoya, A. B\"ottcher, and E. A. Maximenko}

\vspace{4mm}
{\em Dedicated with thanks to Sergei Grudsky, who has left his imprint on all the three of us, on his sixtieth birthday}

{\footnotesize

\vspace{8mm}
{\bf Abstract}
We present conditions that allow us to pass
from the convergence of probability measures in distribution
to the uniform convergence of the associated quantile functions. Under these conditions,
one can in particular pass from the asymptotic distribution of collections of real numbers,
such as the eigenvalues of a family of $n$-by-$n$ matrices as $n$ goes to infinity,
to their uniform approximation by the values of the quantile function at equidistant points.
For Hermitian Toeplitz-like matrices, convergence in distribution is ensured by
theorems of the Szeg\H{o} type. Our results transfer these convergence theorems into uniform
convergence statements.

\vspace{2mm}
{\bf Keywords} convergence in distribution, quantile function, Toeplitz matrix, eigenvalue asymptotics

\vspace{2mm}
{\bf Mathematics Subject Classification}
Primary 60B10,
Secondary 15B05, 15A18, 28A20. 47B35}

\section{Introduction and main results}\label{sec:intro}

It was exactly 100 years ago when Szeg\H{o} published his seminal paper \cite{Szeg1} on Toeplitz determinants.
Only five years later, his theorem on the asymptotic distribution of the eigenvalues of Hermitian Toeplitz matrices
appeared~\cite{Szeg2}. Since then spectral properties of Toeplitz matrices, in particular the collective behavior
of eigenvalues, have been extensively studied
by many authors; see, for example, the books~\cite{BG2005,BS1999}. However, it was only recently that
asymptotic formulas for individual eigenvalues inside the spectrum backed in the interest;
see~\cite{BBGM2015simpleloop,BGM2010,DIK2012}. This topic is still in its infancy, because the results
so far available cover very particular classes of generating functions only.

\vsk
In our paper \cite{BBGM2015maximum} with Sergei Grudsky, which was in fact inspired by the
papers~\cite{BBGM2015simpleloop,Trench2012},
we proved a result on the uniform approximation of the singular values of Toeplitz matrices,
which are the eigenvalues in the case of positive definite Hermitian matrices.
The purpose of the present paper is to simplify some proofs from \cite{BBGM2015maximum} and
to put the approach into a more abstract setting, thus
extending the range of possible applications.

{\footnotesize

\vspace{3mm}
----------------------------------------------------------------------------------------------------------------------------------

The third author's research was partially supported by project
IPN-SIP 20150422 (Instituto Polit\'{e}cnico Nacional, Mexico).

-----------------------------------------------------------------------------------------------------------------------------------

\vsg
J. M. Bogoya,
Pontificia Universidad Javeriana, Departamento de Matem\'aticas, 01110 Bogot\'a D.C., Colombia

e-mail: jbogoya@javeriana.edu.co

\vsk
A. B\"ottcher,
Fakult\"at f\"ur Mathematik,
Technische Universit\"at Chemnitz,
09107 Chemnitz,
Germany

e-mail: aboettch@mathematik.tu-chemnitz.de

\vsk
E. A. Maximenko,
Instituto Polit\'ecnico Nacional, Escuela Superior de F\'{\i}sica y Matem\'aticas, 07730 Ciudad de M\'exico, Mexico

e-mail: maximenko@esfm.ipn.mx
}

\newpage
A probability measure $\AbstractProb$ is called a \emph{Borel probability measure} on $\bR$
if its domain contains the Borel $\sigma$-algebra over $\bR$.
Given a Borel probability measure $\AbstractProb$ on $\bR$,
the corresponding \emph{cumulative distribution function} $F_\AbstractProb\colon\bR\to[0,1]$
and \emph{quantile function} $Q_\AbstractProb\colon(0,1)\to\bR$ are defined by
\begin{equation}\label{eq:defFQ}
F_\AbstractProb(v)\eqdef\AbstractProb(-\infty,v],\qquad
Q_\AbstractProb(p)\eqdef\inf\{v\in\bR\colon\ F_\AbstractProb(v)\ge p\}.
\end{equation}

\vsk
The \emph{support} of $\AbstractProb$ is the set
\begin{equation}\label{eq:defsupport}
\supp(\AbstractProb)\eqdef\{v\in\bR\colon\AbstractProb(v-\eps,v+\eps)>0\quad\forall\eps>0\}.
\end{equation}
If $\AbstractProb$ has a bounded support,
then the function $Q_\AbstractProb$ has finite limits at the points~$0$ and~$1$,
and we extend $Q_\AbstractProb$ to $[0,1]$ by continuity.

\vsk
Herewith our first main result.

\begin{theorem}\label{thm:measures}
Let $\LimProb$ be a Borel probability measure on $\bR$
and $(\prob_n)_{n=1}^\infty$ be a sequence of Borel probability measures on $\bR$
that converges to $\LimProb$ in distribution, i.e.,
\begin{equation}\label{eq:probconvdistr}
\lim_{n\to\nf} \int_\bR \ph\,d\prob_n = \int_\bR \ph\,d\LimProb
\end{equation}
for every $\ph\in C_b(\bR)$.
Moreover, suppose that $\supp(\LimProb)$ is a bounded and connected set
and that $\supp(\prob_n)\subseteq\supp(\LimProb)$ for every $n\in\bN$.
Then the sequence $(Q_{\prob_n})_{n=1}^\infty$ converges uniformly to $Q_\LimProb$:
\begin{equation}\label{eq:probconvuni}
\lim_{n\to\nf}\sup_{p\in[0,1]}|Q_{\prob_n}(p)-Q_\LimProb(p)|=0.
\end{equation}
\end{theorem}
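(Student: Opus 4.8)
\emph{Proof proposal.} The plan is to argue in three steps: (i) show that the limiting quantile function $Q_\LimProb$ is continuous on all of $[0,1]$; (ii) prove the pointwise convergence $Q_{\prob_n}(p)\to Q_\LimProb(p)$ for each $p$; (iii) upgrade this to uniform convergence by a P\'olya-type argument exploiting that the $Q_{\prob_n}$ are non-decreasing. For step (i): since $\supp(\LimProb)$ is closed, bounded, and connected, it is a compact interval $[a,b]$; if $a=b$ then $\LimProb$ and, by the hypothesis $\supp(\prob_n)\subseteq\supp(\LimProb)$, each $\prob_n$ equals the Dirac measure at $a$ and there is nothing to prove, so we may assume $a<b$. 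The function $Q_\LimProb$ is non-decreasing and left-continuous, hence any discontinuity would be an upward jump at some $p_0\in(0,1)$, which would produce a nonempty open interval $(v_1,v_2)$, with $v_1=Q_\LimProb(p_0)$ and $v_2=\lim_{p\downarrow p_0}Q_\LimProb(p)$, on which $F_\LimProb$ is identically equal to $p_0$. Then $\LimProb(v_1,v_2)=0$, so no point of $(v_1,v_2)$ lies in $\supp(\LimProb)$; but $Q_\LimProb$ takes its values in $[a,b]$, so $a\le v_1<v_2\le b$ and $(v_1,v_2)\subseteq(a,b)\subseteq\supp(\LimProb)$, a contradiction. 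Thus $Q_\LimProb$ is continuous on $(0,1)$, and since $\supp(\LimProb)$ is bounded it has finite one-sided limits at the endpoints, so the continuous extension gives $Q_\LimProb\in C[0,1]$ with $Q_\LimProb(0)=a$ and $Q_\LimProb(1)=b$.

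For step (ii) I would use that \eqref{eq:probconvdistr} is equivalent to $F_{\prob_n}(v)\to F_\LimProb(v)$ at every continuity point $v$ of $F_\LimProb$, the set of such points being dense. Fixing $p\in(0,1)$, $\eps>0$, and $q=Q_\LimProb(p)$: a continuity point $v$ with $q-\eps\le v<q$ satisfies $F_\LimProb(v)<p$, so $F_{\prob_n}(v)<p$ for large $n$ and therefore $Q_{\prob_n}(p)>v\ge q-\eps$. For the reverse inequality, if $q=b$ then $Q_{\prob_n}(p)\le b<q+\eps$ because $\supp(\prob_n)\subseteq[a,b]$; if $q<b$, then $F_\LimProb$ cannot be constant on $\bigl(q,\min(q+\eps,b)\bigr)$ --- otherwise, exactly as in step (i), this interval would be disjoint from $\supp(\LimProb)$ while lying in $(a,b)$ --- and since $F_\LimProb\ge p$ there, there is a continuity point $v\in(q,q+\eps)$ with $F_\LimProb(v)>p$, whence $F_{\prob_n}(v)\ge p$ for large $n$ and $Q_{\prob_n}(p)\le v<q+\eps$. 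This yields $Q_{\prob_n}(p)\to Q_\LimProb(p)$ for all $p\in(0,1)$.

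For step (iii), uniform continuity of $Q_\LimProb$ on $[0,1]$ lets us pick $0=p_0<p_1<\dots<p_k=1$ with $Q_\LimProb(p_j)-Q_\LimProb(p_{j-1})<\eps$ for all $j$; step (ii) gives $N$ with $|Q_{\prob_n}(p_j)-Q_\LimProb(p_j)|<\eps$ for $n\ge N$ and all interior $j$. For any $p\in[0,1]$, choosing $j$ with $p_{j-1}\le p\le p_j$ and using the monotonicity of $Q_{\prob_n}$ and $Q_\LimProb$, the estimates at $p_{j-1},p_j$, and --- for the two boundary cells --- the bounds $a\le Q_{\prob_n}(p)\le b$ (again from $\supp(\prob_n)\subseteq[a,b]$) together with $Q_\LimProb(0)=a$, $Q_\LimProb(1)=b$, one sandwiches $Q_{\prob_n}(p)$ between $Q_\LimProb(p)-2\eps$ and $Q_\LimProb(p)+2\eps$; as $\eps$ was arbitrary this is \eqref{eq:probconvuni}.

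The step I expect to be the heart of the matter is (i): it is the only place where connectedness of $\supp(\LimProb)$ is used essentially, and the resulting continuity of $Q_\LimProb$ is precisely what makes uniform convergence possible, since a jump of $Q_\LimProb$ approximated by the $Q_{\prob_n}$ at slightly displaced heights would obstruct it. Steps (ii) and (iii) are the standard passage from convergence in distribution to pointwise convergence of quantiles and the standard P\'olya upgrade from pointwise to uniform convergence of monotone functions with continuous limit; the only mild care needed there is the handling of the endpoints $p=0,1$, which is where $\supp(\prob_n)\subseteq\supp(\LimProb)$ enters.
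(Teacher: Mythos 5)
Your proof is correct and follows essentially the same route as the paper: continuity of $Q_\LimProb$ from the connectedness of $\supp(\LimProb)$, pointwise convergence of quantiles from convergence in distribution, and a P\'olya-type upgrade to uniform convergence using the monotonicity of the $Q_{\prob_n}$. The only differences are cosmetic: you prove from scratch what the paper quotes (the continuity criterion of Proposition~\ref{critcont} and the quantile form of the portmanteau lemma, Proposition~\ref{prop:critconvdistr}), and you absorb the endpoints $p=0,1$ into the P\'olya step via the bounds $a\le Q_{\prob_n}\le b$ coming from $\supp(\prob_n)\subseteq[a,b]$, whereas the paper first establishes pointwise convergence at $0$ and $1$ separately and then applies Proposition~\ref{prop:monotone_implies_uniform} on all of $[0,1]$.
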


In this theorem, the class $C_b(\bR)$ of  bounded continuous functions of $\bR$ to $\bC$
can be substituted by the class $C_c(\bR)$ of continuous functions with compact support,
because $\supp(\LimProb)$ is supposed to be a segment of $\bR$.

\vsk
Theorem \ref{thm:measures} makes precise what we mean by passing from convergence in
distribution to uniform convergence. We emphasize that this passage is based on two
assumptions: first, $\supp(\LimProb)$ is required to be a segment and secondly,
all supports $\supp(\prob_n)$ must be contained in this segment. These assumptions are
not caused by our proof but are essential. In~\cite{BBGM2015maximum} we considered a concrete
realization of the setting and showed that the conclusion of Theorem~\ref{thm:measures}
is no longer true if one of the two assumptions is violated.

\vsk
We now specialize the measures $\prob_n$
to be discrete measures associated to collections of real numbers.
On the other hand, we allow $d\LimProb$ to be of the form $X d\ProbInDomain$
with a measurable function $\LimFunction$ on an abstract probability space $(\Omega,\cF,\ProbInDomain)$.
In this setting, one makes the following definition
(see \cite{GSV2015}, for example).
Let $(\tuplesize(n))_{n=1}^\infty$ be a sequence of positive integer numbers tending to infinity
and let
\[\al=\left(\al^{(n)}_1,\ldots,\al^{(n)}_{\tuplesize(n)}\right)_{n=1}^\infty\]
be a sequence of collections of real numbers.
In addition, let $(\Domain,\cF,\ProbInDomain)$ be a probability space
and $\LimFunction\colon\Domain\to\bR$ be an $\cF$-measurable function.
The sequence $\al$ is said to be \emph{asymptotically distributed} as $(\LimFunction,\ProbInDomain)$
if, for every function $\ph\in C_c(\bR)$,
\begin{equation}\label{eq:def_asympt_distr}
\lim_{n\to\nf} \frac{1}{\tuplesize(n)}\sum_{j=1}^{\tuplesize(n)} \ph(\alpha^{(n)}_j) = \int_\Omega \ph\circ\LimFunction\,d\ProbInDomain.
\end{equation}

\vsk
Given a probability space $(\Domain,\cF,\ProbInDomain)$
and an $\cF$-measurable function $\LimFunction\colon\Domain\to\bR$,
we denote by $\Range(\LimFunction)$, $F_{\LimFunction}$, and $Q_{\LimFunction}$
the essential range of $\LimFunction$, the cumulative distribution function,
and the quantile function associated to $\LimFunction$:
\begin{eqnarray*}
& & \Range(\LimFunction)\eqdef\{v\in\bR\colon\ProbInDomain(\LimFunction^{-1}(v-\eps,v+\eps))>0\quad\forall\eps>0\},\\
& & F_\LimFunction(v)\eqdef\ProbInDomain(\LimFunction^{-1}(-\infty,v]),\quad
Q_\LimFunction(p)\eqdef\inf\{v\in\bR\colon\ F_\LimFunction(v)\ge p\}.
\end{eqnarray*}
In this situation we have our second main result.

\begin{theorem}\label{thm:tuples}
Let a sequence $\al$ of collections of real numbers be
asymptotically distributed as $(\LimFunction,\ProbInDomain)$.
Suppose $\Range(\LimFunction)$ is connected and bounded
and suppose also that, for each $n\in\bN$, the numbers $\al^{(n)}_1,\ldots,\al^{(n)}_{\tuplesize(n)}$ belong to $\Range(\LimFunction)$
and are ordered in the ascending manner:
\begin{equation}\label{ascending}
\al^{(n)}_1\le \al^{(n)}_2 \le \dots \le \al^{(n)}_{\tuplesize(n)}.
\end{equation}
Then
\begin{equation}\label{eq:convtuplessup}
\lim_{n\to\nf} \max_{1\le j\le \tuplesize(n)}\sup_{\frac{j-1}{\tuplesize(n)}\le u\le\frac{j}{\tuplesize(n)}} |\alpha^{(n)}_j-Q_\LimProb(u)| = 0.
\end{equation}
In particular,
\begin{equation}\label{eq:convtuplesnodes}
\lim_{n\to\nf} \max_{1\le j\le \tuplesize(n)} |\al^{(n)}_j-Q_\LimProb(j/\tuplesize(n))| = 0.
\end{equation}
\end{theorem}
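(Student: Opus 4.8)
The plan is to obtain Theorem~\ref{thm:tuples} as a corollary of Theorem~\ref{thm:measures} by passing to empirical measures. For each $n$, let $\prob_n$ be the discrete probability measure that places mass $1/\tuplesize(n)$ at each of the points $\al^{(n)}_1,\dots,\al^{(n)}_{\tuplesize(n)}$, i.e.\ $\prob_n=\frac{1}{\tuplesize(n)}\sum_{j=1}^{\tuplesize(n)}\delta_{\al^{(n)}_j}$, and let $\LimProb\eqdef\ProbInDomain\circ\LimFunction^{-1}$ be the push-forward of $\ProbInDomain$ under $\LimFunction$. Then for every bounded Borel function $\ph$ one has $\int_\bR\ph\,d\prob_n=\frac{1}{\tuplesize(n)}\sum_{j=1}^{\tuplesize(n)}\ph(\al^{(n)}_j)$ and $\int_\bR\ph\,d\LimProb=\int_\Domain\ph\circ\LimFunction\,d\ProbInDomain$, so the asymptotic distribution hypothesis~\eqref{eq:def_asympt_distr} says precisely that $\prob_n\to\LimProb$ in distribution when tested against $C_c(\bR)$; by the remark following Theorem~\ref{thm:measures}, and since $\supp(\LimProb)$ will be a segment, this is equivalent to testing against $C_b(\bR)$. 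Comparing~\eqref{eq:defsupport} with the definition of $\Range(\LimFunction)$ gives $\supp(\LimProb)=\Range(\LimFunction)$, which is bounded and connected by hypothesis; moreover $F_\LimProb=F_\LimFunction$ and $Q_\LimProb=Q_\LimFunction$, which reconciles the notation of~\eqref{eq:convtuplessup}. Finally, $\supp(\prob_n)=\{\al^{(n)}_1,\dots,\al^{(n)}_{\tuplesize(n)}\}\subseteq\Range(\LimFunction)=\supp(\LimProb)$ because each $\al^{(n)}_j$ lies in $\Range(\LimFunction)$. Thus all hypotheses of Theorem~\ref{thm:measures} are satisfied, and it yields $\eps_n\eqdef\sup_{p\in[0,1]}|Q_{\prob_n}(p)-Q_\LimProb(p)|\to 0$.

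The second step is to identify $Q_{\prob_n}$ with the step function built from the ordered collection. Since the $\al^{(n)}_j$ are arranged as in~\eqref{ascending}, the function $F_{\prob_n}(v)=\frac{1}{\tuplesize(n)}\#\{j\colon\al^{(n)}_j\le v\}$ is a nondecreasing step function, and a direct inspection of the definition~\eqref{eq:defFQ} shows $Q_{\prob_n}(u)=\al^{(n)}_j$ for every $u\in\bigl(\tfrac{j-1}{\tuplesize(n)},\tfrac{j}{\tuplesize(n)}\bigr]$ and every $j$. Hence, for such $u$, $|\al^{(n)}_j-Q_\LimProb(u)|=|Q_{\prob_n}(u)-Q_\LimProb(u)|\le\eps_n$.

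It remains to include the left endpoint $u=\tfrac{j-1}{\tuplesize(n)}$ and take the maximum over $j$. Because $\supp(\LimProb)$ is a segment, $F_\LimProb$ is strictly increasing on that segment, hence has no interval of constancy on which its value lies in $(0,1)$, and therefore $Q_\LimProb$ is continuous on $[0,1]$ (this is also used in the proof of Theorem~\ref{thm:measures}). Letting $u\downarrow\tfrac{j-1}{\tuplesize(n)}$ in the inequality above and invoking this continuity gives $\bigl|\al^{(n)}_j-Q_\LimProb\bigl(\tfrac{j-1}{\tuplesize(n)}\bigr)\bigr|\le\eps_n$ as well, so that $\max_{1\le j\le\tuplesize(n)}\sup_{(j-1)/\tuplesize(n)\le u\le j/\tuplesize(n)}|\al^{(n)}_j-Q_\LimProb(u)|\le\eps_n\to0$, which is~\eqref{eq:convtuplessup}; taking $u=j/\tuplesize(n)$ then gives~\eqref{eq:convtuplesnodes}.

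The main obstacle is essentially not in this deduction: all the analytic content sits in Theorem~\ref{thm:measures}, and the argument above is mainly a matter of matching hypotheses. The one place needing genuine (if small) care is the quantile identification $Q_{\prob_n}(u)=\al^{(n)}_j$ on $\bigl(\tfrac{j-1}{\tuplesize(n)},\tfrac{j}{\tuplesize(n)}\bigr]$ together with the closed-versus-half-open issue at the nodes, which is exactly why the continuity of $Q_\LimProb$ — and ultimately the connectedness of $\supp(\LimProb)$ — must be invoked once more here. One should also verify carefully that the empirical-measure reformulation reproduces \emph{all} hypotheses of Theorem~\ref{thm:measures}, in particular the inclusion $\supp(\prob_n)\subseteq\supp(\LimProb)$, which here follows at once from $\al^{(n)}_j\in\Range(\LimFunction)$.
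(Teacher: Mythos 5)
Your proof is correct and follows essentially the same route as the paper: pass to the empirical measures $\prob_n$ and the push-forward $\LimProb=\ProbInDomain\circ\LimFunction^{-1}$, check the hypotheses of Theorem~\ref{thm:measures} (including $\supp(\prob_n)\subseteq\supp(\LimProb)$ and the upgrade from $C_c$ to $C_b$ via boundedness of $\Range(\LimFunction)$), and read off the conclusion from $Q_{\prob_n}(j/\tuplesize(n))=\al^{(n)}_j$. The only (harmless) difference is at the very end: the paper first proves \eqref{eq:convtuplesnodes} and then adds the modulus of continuity of $Q_\LimProb$ over intervals of length $1/\tuplesize(n)$ to get \eqref{eq:convtuplessup}, whereas you identify $Q_{\prob_n}$ as the full step function on the half-open intervals and bound the supremum directly by $\sup_p|Q_{\prob_n}(p)-Q_\LimProb(p)|$.
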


We finally consider the special case where $\Domain$ is a finite interval in $\bR$,
$\ProbInDomain$ is the normalized Lebesgue measure on $\Domain$,
and $\LimFunction$ is Riemann integrable. In that case we prove the following theorem,
which reveals that the values of $Q_{\LimProb}$ at equidistant points
can be replaced by the ordered values of the original function $\LimFunction$ at some points of $\Domain$.

\begin{theorem}\label{thm:Riemann}
Let $\Domain$ be a bounded interval of $\bR$, $\ProbInDomain$ be the normalized Lebesgue measure on $\Domain$,
$\LimFunction\colon\Domain\to\bR$ be a Riemann integrable function with connected essential range,
$\al=(\al^{(n)}_1,\ldots,\al^{(n)}_{\tuplesize(n)})_{n=1}^{\nf}$ be a sequence of collections of real numbers
asymptotically distributed as $(\LimFunction,\ProbInDomain)$ such that, for every $n\in\bN$,
the numbers $\al^{(n)}_1,\ldots,\al^{(n)}_{\tuplesize(n)}$
satisfy \eqref{ascending} and belong to $\Range(\LimFunction)$.
Furthermore, for every $n\in\bN$, let $\xi^{(n)}_1,\ldots,\xi^{(n)}_{\tuplesize(n)}$
be any points belonging to the different parts of the canonical $\tuplesize(n)$-partition of the interval $\Domain$,
let $v^{(n)}_1,\ldots,v^{(n)}_{\tuplesize(n)}$ be the values of $\LimFunction$ at these points,
and let $\si_n$ be a permutation of $\{1,\ldots,\tuplesize(n)\}$ such that
\[
v^{(n)}_{\si_n(1)}\le\dots\le v^{(n)}_{\si_n(\tuplesize(n))}.
\]
Then
\[
\lim_{n\to\nf}\max_{1\le j\le \tuplesize(n)} |\al^{(n)}_j-v^{(n)}_{\si_n(j)}| = 0.
\]
\end{theorem}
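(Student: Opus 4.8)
The plan is to invoke Theorem~\ref{thm:tuples} twice and to combine the two outcomes by the triangle inequality. Applied to the sequence $\al$ itself --- whose hypotheses are exactly those assumed in the present theorem --- Theorem~\ref{thm:tuples} gives, via \eqref{eq:convtuplesnodes},
\[
\lim_{n\to\nf}\max_{1\le j\le\tuplesize(n)}\bigl|\al^{(n)}_j-Q_\LimProb(j/\tuplesize(n))\bigr|=0 ,
\]
where $\LimProb$ is the distribution of $\LimFunction$ on $\bR$. Hence it is enough to establish the same approximation for the ascending sample $\be^{(n)}\eqdef\bigl(v^{(n)}_{\si_n(1)},\ldots,v^{(n)}_{\si_n(\tuplesize(n))}\bigr)$, that is, to check that $(\be^{(n)})_n$ fulfils the hypotheses of Theorem~\ref{thm:tuples}; subtracting the two resulting displays then yields the claim. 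So I would reduce everything to verifying, for the tuples $\be^{(n)}$: (i) that they are ascending, which holds by the definition of $\si_n$; (ii) that their entries lie in $\Range(\LimFunction)$; (iii) that $\Range(\LimFunction)$ is connected and bounded, which is assumed; and (iv) that $(\be^{(n)})_n$ is asymptotically distributed as $(\LimFunction,\ProbInDomain)$.

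Item (iv) is a statement about convergence of Riemann sums. Fix $\ph\in C_c(\bR)$. Since $\LimFunction$ is Riemann integrable it is bounded and, by Lebesgue's criterion, continuous outside a set of Lebesgue measure zero; composing with the continuous function $\ph$ preserves both properties, so $\ph\circ\LimFunction$ is Riemann integrable on $\Domain$. The points $\xi^{(n)}_1,\ldots,\xi^{(n)}_{\tuplesize(n)}$ are tags for the canonical $\tuplesize(n)$-partition of $\Domain$, whose mesh $\LebesgueMeasure(\Domain)/\tuplesize(n)$ tends to $0$; therefore the associated tagged Riemann sums converge to $\int_\Domain\ph\circ\LimFunction\,d\LebesgueMeasure$ independently of the choice of tags, and division by $\LebesgueMeasure(\Domain)$ gives
\[
\lim_{n\to\nf}\frac1{\tuplesize(n)}\sum_{j=1}^{\tuplesize(n)}\ph\bigl(v^{(n)}_j\bigr)
=\frac1{\LebesgueMeasure(\Domain)}\int_\Domain\ph\circ\LimFunction\,d\LebesgueMeasure
=\int_\Domain\ph\circ\LimFunction\,d\ProbInDomain .
\]
Reordering finitely many summands does not change the sum, so this is precisely the asymptotic distribution property \eqref{eq:def_asympt_distr} for $(\be^{(n)})_n$, which is (iv).

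The step I expect to be the main obstacle is (ii), the containment $v^{(n)}_j=\LimFunction(\xi^{(n)}_j)\in\Range(\LimFunction)$. The relevant facts are that $\Range(\LimFunction)$ is closed, that $\LimFunction(t)\in\Range(\LimFunction)$ at every continuity point $t$ of $\LimFunction$ (because then every neighbourhood of $\LimFunction(t)$ pulls back to a set containing an interval around $t$, hence of positive measure), and that $\LimFunction$ is continuous off a null set; together these give (ii) automatically when $\LimFunction$ is continuous, while in the general Riemann integrable case one must be careful, taking the tags $\xi^{(n)}_j$ among the continuity points of $\LimFunction$, since a Riemann integrable function may well take values outside its essential range on its set of discontinuities. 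Once (i)--(iv) are in place, Theorem~\ref{thm:tuples} applies to $(\be^{(n)})_n$ and, together with its application to $\al$ above, the triangle inequality delivers $\max_{1\le j\le\tuplesize(n)}\bigl|\al^{(n)}_j-v^{(n)}_{\si_n(j)}\bigr|\to0$, as desired.
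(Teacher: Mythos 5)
Your skeleton --- inserting $Q_{\LimProb}(j/\tuplesize(n))$ and using the triangle inequality --- is the same as the paper's. The paper gets the first half from \eqref{eq:convtuplesnodes} of Theorem~\ref{thm:tuples} and the second half, $\max_j\bigl|v^{(n)}_{\si_n(j)}-Q_{\LimFunction}(j/\tuplesize(n))\bigr|\to 0$, by quoting Proposition~\ref{prop:RM}, whose proof is imported from \cite{BBGM2015maximum}. You instead try to manufacture the second half by applying Theorem~\ref{thm:tuples} a second time, to the sorted sample $\be^{(n)}$. Your verification of (iv) is correct and is indeed the analytic core of the matter: $\ph\circ\LimFunction$ is Riemann integrable, tagged Riemann sums over the canonical partitions converge regardless of the tags, so $\be^{(n)}$ is asymptotically distributed as $(\LimFunction,\ProbInDomain)$.

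The gap is item (ii), exactly where you predicted, and it is genuine rather than cosmetic. Theorem~\ref{thm:tuples} requires every entry of the tuple to lie in $\Range(\LimFunction)$, and the paper explicitly warns that the corresponding support hypothesis in Theorem~\ref{thm:measures} is essential, not an artifact of the proof. For a merely Riemann integrable $\LimFunction$ the values at discontinuity points can escape $\Range(\LimFunction)$, and the theorem demands the conclusion for \emph{any} admissible tags, so you may not relocate the $\xi^{(n)}_j$ to continuity points --- that proves a different, weaker statement. Nor is the defect asymptotically negligible in the maximum norm: with $\Domain=[0,1]$, $\LimFunction(t)=t$ for $t\neq\tfrac12$ and $\LimFunction(\tfrac12)=2$, one has $\Range(\LimFunction)=[0,1]$ connected, yet tagging the part containing $\tfrac12$ at $\tfrac12$ gives $v^{(n)}_{\si_n(\tuplesize(n))}=2$ while $Q_{\LimFunction}\le 1$, so the quantity you need to kill stays $\ge 1$ for every $n$. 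Thus your reduction to Theorem~\ref{thm:tuples} cannot be completed without additional control of $\LimFunction$ on its (null, but possibly dense) set of discontinuities; supplying that control is precisely the job of Proposition~\ref{prop:RM}, i.e., of the argument in \cite{BBGM2015maximum}, which your proof does not replace. (The example also shows that the statement implicitly presupposes such a safeguard on the sample values; in any case, the burden lies with Proposition~\ref{prop:RM}, not with the triangle-inequality skeleton.)
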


Here is an outline of the paper.
After recalling some general continuity properties of the quantile function
in Section~\ref{sec:qu}, we prove the main results stated above in~Section~\ref{sec:proofs}.
In Section~\ref{sec:applToeplitz} we embark on some applications of the theorems
to the singular values and eigenvalues of Toeplitz-like matrices,
and in Section~\ref{sec:applsequences} we give examples of applications
to problems from beyond the matrix world.

\section{Continuity of the quantile function}
\label{sec:qu}

In this section we record some continuity properties of the quantile function.
This section is very close to the Section~2 in~\cite{BBGM2015maximum}, but we changed some technical details.

\vsk
Throughout this section we suppose that $\AbstractProb$ is a Borel probability measure on $\bR$ with bounded support.
We use the simplified notation $F$ and $Q$ for the functions $F_\AbstractProb$ and $Q_\AbstractProb$, correspondingly.
Recall that these functions are defined by \eqref{eq:defFQ}.

\vsk
It is well known and readily verified
that $F$ and $Q$ are monotonically increasing (in the non-strict sense),
that $F$ is continuous from the right, that the infimum in the definition of $Q(p)$
belongs to the set $\bigl\{v\in\bR\colon p\le F(v)\bigr\}$
and therefore is the minimum of this set,
and that $Q$ is continuous from the left.
The one-sided limits $F(v^-)$ and $Q(p^+)$
exist for each $v\in\bR$ and each $p\in(0,1)$.
It follows from the definition of $Q$ that, for every $v$ in $\bR$,
\begin{equation}\label{eq:QFv_le_v}
Q(F(v))\le v
\end{equation}
and
\begin{equation}\label{eq:QFvplus_ge_v}
Q(F(v)^+)\ge v.
\end{equation}
We thoroughly work under the assumtion that  $\supp(\AbstractProb)$ is compact.
We denote by $\al$ and $\be$ the minimum and maximum of $\supp(\AbstractProb)$:
\begin{equation}\label{eq:minmaxsupp}
\al \eqdef \min\supp(\AbstractProb),\qquad \be \eqdef \max\supp(\AbstractProb).
\end{equation}
We write $F(-\infty)$ and $F(+\infty)$ for
the limits of $F(v)$ as $v\to-\infty$ and $v\to+\infty$, respectively.
The next proposition deals with $F$ near $\al$ and $\be$ and with $Q$ near $0$ and~$1$.

\begin{proposition}\label{prop:FQextreme}
The functions $F$ and $Q$ have the following properties.

{\rm (a)} $F(v)=0$ for every $v$ in $[-\nf,\al)$.

{\rm (b)} $F(\al)=\AbstractProb\{\al\}$.

{\rm (c)} $0<F(v)<1$ for every $v$ in $(\al,\be)$.

{\rm (d)} $F(v)=1$ for every $v$ in $[\be,+\nf]$.

{\rm (e)} $\al\le Q(p)\le\be$ for every $p$ in $(0,1)$.

{\rm (f)} $Q(0^+)=\al$, $Q(1^-)=\be$.
\end{proposition}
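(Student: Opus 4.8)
The plan is to base everything on two elementary facts: the complement $\bR\sm\supp(\AbstractProb)$ of the support is an open set of $\AbstractProb$-measure zero (each of its points has a neighbourhood of zero $\AbstractProb$-measure, and $\bR$ is Lindel\"{o}f), and every point of $\supp(\AbstractProb)$ has positive $\AbstractProb$-mass in each of its symmetric neighbourhoods by \eqref{eq:defsupport}. With these in hand, I would first settle the statements on $F$ away from $(\al,\be)$. Since $\al=\min\supp(\AbstractProb)$, the ray $(-\nf,\al)$ is disjoint from $\supp(\AbstractProb)$, hence $\AbstractProb$-null, so $F(v)=\AbstractProb(-\nf,v]=0$ for all $v<\al$; this is (a), and (b) follows from $F(\al)=\AbstractProb(-\nf,\al)+\AbstractProb\{\al\}=\AbstractProb\{\al\}$. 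Symmetrically $(\be,+\nf)$ is $\AbstractProb$-null, so $\AbstractProb(v,+\nf)\le\AbstractProb(\be,+\nf)=0$ and hence $F(v)=1-\AbstractProb(v,+\nf)=1$ for all $v\ge\be$, which is (d).

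For (c) I would fix $v\in(\al,\be)$, which in particular forces $\al<\be$. Since $\al\in\supp(\AbstractProb)$ and $v-\al>0$, choosing any $\eps\in(0,v-\al]$ gives $(\al-\eps,\al+\eps)\subseteq(-\nf,v]$, so $F(v)\ge\AbstractProb(\al-\eps,\al+\eps)>0$; symmetrically, using $\be\in\supp(\AbstractProb)$ and $\be-v>0$, any $\eps\in(0,\be-v]$ yields $(\be-\eps,\be+\eps)\subseteq(v,+\nf)$, so $\AbstractProb(v,+\nf)>0$ and $F(v)<1$. For (e) I would feed (a) and (d) into the definition of $Q$: $F(\be)=1\ge p$ shows $\be$ lies in $\{v\colon F(v)\ge p\}$, so $Q(p)\le\be$, while $F(v)=0<p$ for every $v<\al$ shows no such $v$ lies in that set, so $Q(p)\ge\al$.

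It remains to prove (f). Since $Q$ is monotonically increasing, $Q(0^+)=\inf_{p\in(0,1)}Q(p)$ and $Q(1^-)=\sup_{p\in(0,1)}Q(p)$, so (e) already gives $Q(0^+)\ge\al$ and $Q(1^-)\le\be$. For the reverse inequalities I would combine \eqref{eq:QFv_le_v}--\eqref{eq:QFvplus_ge_v} with the positivity facts for $F$, observing that the computation in (c) in fact uses only $v>\al$ (resp. $v<\be$) and hence shows $F(v)>0$ for every $v>\al$ and $F(v)<1$ for every $v<\be$. For $v>\al$ we then get $Q(0^+)\le Q(F(v))\le v$ by \eqref{eq:QFv_le_v}, and letting $v\downarrow\al$ gives $Q(0^+)\le\al$; for $v<\be$ we have $F(v)<1$, hence $Q(F(v)^+)\le Q(1^-)$, and \eqref{eq:QFvplus_ge_v} gives $v\le Q(F(v)^+)\le Q(1^-)$, which upon letting $v\uparrow\be$ gives $\be\le Q(1^-)$. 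There is no deep difficulty here; the argument is essentially bookkeeping, and the point needing the most care is (f), where one must verify the $F$-positivity statements under the weaker hypotheses $v>\al$ and $v<\be$ so as to also cover the degenerate case $\al=\be$ (a single point mass, for which (c) is vacuous), and must pass to one-sided limits using only the monotonicity --- not the continuity --- of $Q$.
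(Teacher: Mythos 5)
Your proof is correct and follows essentially the same route as the paper: parts (a)--(e) are handled by direct appeal to the definitions of $F$, $\al$, $\be$, and $Q$, and part (f) is obtained from \eqref{eq:QFv_le_v} and \eqref{eq:QFvplus_ge_v} combined with the facts that $F(v)>0$ for $v>\al$ and $F(v)<1$ for $v<\be$, which is exactly the paper's argument (phrased there with an explicit $\eps$ and $q=F(\al+\eps)$, resp.\ $q=F(\be-\eps)$, rather than with one-sided limits). Your extra care about the degenerate case $\al=\be$ and about the measure-zero complement of the support only fills in details the paper leaves implicit.
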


\begin{proof}
Properties (a) to (d) follow directly from the definition of $F$, $\al$, and $\be$.
Given $p\in(0,1)$, the inequality $Q(p)\ge\al$ results from (a),
while the inequality $Q(p)\le\be$ is a consequence of (c). This proves (e),
and we are left with (f).

\vsk
We first turn to the limit of $Q(p)$ as $p\to0^+$.
Given $\eps>0$, put $q=F(\al+\eps)$.
Then $q>0$, and for every $p$ in $(0,q]$ we can apply \eqref{eq:QFv_le_v} to obtain
\[
\al \le Q(p) \le Q(q) = Q(F(\al+\eps)) \le \al+\eps.
\]
This implies that $Q(0^+)=\al$.
We now consider the limit of $Q(p)$ as $p\to1^-$.
For $\eps>0$, we put $q=F(\be-\eps)$.
Then $q<1$, and for every $p\in(q,1)$ we infer from \eqref{eq:QFvplus_ge_v} that
\[
\be-\eps \le Q(F(\be-\eps)^+) \le Q(p) \le \be.
\]
Thus $Q(1^-)=\be$.
\end{proof}

We extend $Q$ by continuity to $[0,1]$:
$
Q(0):=Q(0^+)=\al$ and $Q(1):=Q(1^-)=\be$.
Note that we do not define $Q(0)$ by putting $p=0$ into \eqref{eq:defFQ},
because the corresponding value would be $-\nf$.

\vsk
Here are some well known or easily verifiable relations between $F$ and $Q$.

\begin{proposition}\label{prop:FQ}
The following are true.

{\rm (a)} $Q(F(v))\le v$ for every $v\in\bR$.

{\rm (b)} $F(Q(u))\ge u$ for every $u\in[0,1]$.

{\rm (c)} Let $u\in[0,1]$ and $v\in\bR$. Then
$Q(u)\le v$ if and only if $u\le F(v)$.

{\rm (d)} If $v_1,v_2\in\bR$
and $F(v_1)<F(v_2)$, then
$v_1<Q(F(v_2))\le v_2$.
\end{proposition}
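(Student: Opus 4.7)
The plan is to dispatch all four parts as direct corollaries of (i) the monotonicity of $F$ and $Q$, (ii) the fact, already noted in the preceding text, that the infimum in the definition of $Q(p)$ is actually attained for $p\in(0,1)$, and (iii) the endpoint extension $Q(0)=\alpha$, $Q(1)=\beta$ supplied by Proposition~\ref{prop:FQextreme}. Item (a) is literally inequality \eqref{eq:QFv_le_v} already recorded earlier, so I would simply cite it: since $F(v)\ge F(v)$, the number $v$ lies in the set $\{w\in\bR\colon F(w)\ge F(v)\}$, whose infimum is $Q(F(v))$.

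For (b) I would split cases. When $u\in(0,1)$ the infimum defining $Q(u)$ is a minimum, so $Q(u)$ itself belongs to $\{v\in\bR\colon F(v)\ge u\}$, giving $F(Q(u))\ge u$. For $u=0$ the inequality $F(Q(0))\ge 0$ is trivial, and for $u=1$ we use $Q(1)=\beta$ together with Proposition~\ref{prop:FQextreme}(d) to get $F(Q(1))=F(\beta)=1$.

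For (c), the ``if'' direction is a one-line combination: $u\le F(v)$ implies $Q(u)\le Q(F(v))\le v$ by the monotonicity of $Q$ and part (a). For the ``only if'' direction, $Q(u)\le v$ implies, by the monotonicity of $F$ and part (b), that $u\le F(Q(u))\le F(v)$.

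For (d), the upper bound $Q(F(v_2))\le v_2$ is exactly (a). The strict lower bound I would prove by contradiction: if $Q(F(v_2))\le v_1$ held, then applying (c) with $u=F(v_2)$ and $v=v_1$ would yield $F(v_2)\le F(v_1)$, contradicting the hypothesis $F(v_1)<F(v_2)$. I expect no real obstacle here; the only subtle point is remembering that (b) at $u=1$ needs the endpoint extension and the full-mass property $F(\beta)=1$, rather than the ``infimum is minimum'' argument that works only on the open interval.
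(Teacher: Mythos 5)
The paper offers no proof of this proposition --- it merely declares the relations ``well known or easily verifiable'' --- and your argument is precisely the standard verification one would supply: (a) is the recorded inequality \eqref{eq:QFv_le_v}, (b) uses the fact that the infimum defining $Q(u)$ is attained for $u\in(0,1)$ together with the endpoint values $Q(1)=\beta$ and $F(\beta)=1$, and (c), (d) follow formally from (a), (b) and monotonicity, so your proposal is correct and matches the intended route. One pedantic caveat, which is a defect of the paper's statement rather than of your proof: under the paper's convention $Q(0)=\alpha$ (instead of the value $-\infty$ that the defining formula would give), part (a) and the ``if'' direction of (c) actually fail for $v<\alpha$, where $F(v)=0$ but $Q(F(v))=Q(0)=\alpha>v$; this edge case never arises in (d), since $F(v_2)>F(v_1)\ge 0$ forces $F(v_2)>0$ there, and your treatment of the cases where $F(v)>0$ is exactly right.
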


\begin{proposition}\label{distrQ}
The distribution function of $Q$ is $F$, i.e., for every $v\in\bR$,
\[
\mu_{\bR}\{u\in[0,1]\colon Q(u)\le v\}=F(v),
\]
where $\mu_{\bR}$ stands for the Lebesgue measure on $\bR$.
\end{proposition}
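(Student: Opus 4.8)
The plan is to describe the set $\{u\in[0,1]\colon Q(u)\le v\}$ explicitly and then simply read off its Lebesgue measure. The one ingredient I would invoke is Proposition~\ref{prop:FQ}(c): for $u\in(0,1]$ and $v\in\bR$, the inequality $Q(u)\le v$ is equivalent to $u\le F(v)$. Hence, up to the single point $u=0$, which is a Lebesgue null set and so irrelevant for the measure, the set $\{u\in[0,1]\colon Q(u)\le v\}$ coincides with $\{u\in(0,1]\colon u\le F(v)\}$.

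Since $F$ takes values in $[0,1]$, this latter set is precisely the interval $(0,F(v)]$ (empty when $F(v)=0$), whose Lebesgue measure equals $F(v)$. Combining the two observations yields $\mu_\bR\{u\in[0,1]\colon Q(u)\le v\}=F(v)$ at once, which is the claim.

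I do not anticipate any real obstacle: once Proposition~\ref{prop:FQ}(c) is granted, the argument is a two-line computation. The only point deserving a moment's attention is the treatment of the endpoints — recalling that $Q$ has been extended to $[0,1]$ by setting $Q(0)=\al$ and $Q(1)=\be$, and noting that adding or removing the null set $\{0\}$ does not affect the measure. Were Proposition~\ref{prop:FQ}(c) not already available, one would first have to derive it from the monotonicity of $F$ and $Q$, the right-continuity of $F$, and the attainment of the infimum in the definition of $Q$, but all of these facts have been established earlier in this section.
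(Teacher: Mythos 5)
Your argument is correct: Proposition~\ref{prop:FQ}(c) identifies $\{u\in(0,1]\colon Q(u)\le v\}$ with the interval $(0,F(v)]$, whose Lebesgue measure is $F(v)$, and the endpoint $u=0$ is a null set. The paper actually states Proposition~\ref{distrQ} without proof, treating it as a well-known consequence of the preceding relations; your write-up is exactly the canonical argument one would supply, and your care at $u=0$ (where the equivalence of Proposition~\ref{prop:FQ}(c) can fail for $v<\al$ after the extension $Q(0)\eqdef\al$) is a sensible precaution.
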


The next criterion implies in particular
that the connectedness of $\supp(\AbstractProb)$
is equivalent to the continuity of $Q$.
This condition plays a crucial role in this paper.
For a proof, see \cite{BBGM2015maximum}.

\begin{proposition}\label{critcont}
The following conditions are equivalent:

{\rm(i)} $\supp(\AbstractProb)$ is connected, i.e., $\supp(\AbstractProb)=[\al,\be]$.

{\rm(ii)} $F$ is strictly increasing on $[\al,\be]$.

{\rm(iii)} $Q(F(v))=v$ for every $v\in[\al,\be]$.

{\rm(iv)} $Q([0,1])=[\al,\be]$.

{\rm(v)} $Q$ is continuous on $[0,1]$.

\end{proposition}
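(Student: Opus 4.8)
The plan is to establish the cyclic chain $(i)\Rightarrow(ii)\Rightarrow(iii)\Rightarrow(iv)\Rightarrow(v)\Rightarrow(i)$, drawing on the elementary facts about $F$ and $Q$ recorded in Propositions~\ref{prop:FQextreme} and~\ref{prop:FQ}, on the convention $Q(0)=\al$, $Q(1)=\be$, and on the definition~\eqref{eq:defsupport} of the support. The whole statement is really the single qualitative assertion that the support is an interval exactly when $F$ has no plateaus strictly inside $[\al,\be]$, which in turn is exactly when $Q$ has no jumps; the five conditions are just five ways of saying this.

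First I would prove the three ``forward'' implications. For $(i)\Rightarrow(ii)$: if $F$ failed to be strictly increasing on $[\al,\be]$, there would be $\al\le v_1<v_2\le\be$ with $F(v_1)=F(v_2)$, hence $\AbstractProb(v_1,v_2)=0$, and then no point of the nonempty open interval $(v_1,v_2)$ could belong to $\supp(\AbstractProb)$, contradicting $\supp(\AbstractProb)=[\al,\be]$. For $(ii)\Rightarrow(iii)$: the inequality $Q(F(v))\le v$ always holds by~\eqref{eq:QFv_le_v}, so it suffices to rule out $Q(F(v))=v'<v$ for $v\in[\al,\be]$; since the infimum defining $Q$ is attained, this would give $F(v')\ge F(v)$, which is impossible when $\al\le v'<v$ by $(ii)$ and impossible when $v'<\al$ because then $F(v')=0$ while $F(v)>0$ for $v\in(\al,\be]$ by Proposition~\ref{prop:FQextreme}(c),(d), the remaining case $v=\al$ being handled directly from the definitions (it reduces to $Q(\AbstractProb\{\al\})=\al$ or to $Q(0)=\al$). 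For $(iii)\Rightarrow(iv)$: one always has $Q([0,1])\subseteq[\al,\be]$ by Proposition~\ref{prop:FQextreme}(e) together with the convention for $Q(0),Q(1)$, and the reverse inclusion is immediate from $(iii)$ since $v=Q(F(v))\in Q([0,1])$ for each $v\in[\al,\be]$.

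Next, $(iv)\Rightarrow(v)$: $Q$ is monotonically increasing on $[0,1]$, hence has one-sided limits everywhere and can only have jump discontinuities; but a jump of $Q$ at a point $p_0$ would force $Q$ to omit a nonempty open subinterval of $[\al,\be]$, contradicting $Q([0,1])=[\al,\be]$. Finally $(v)\Rightarrow(i)$ is best done by contraposition: assume $\supp(\AbstractProb)$ is not all of $[\al,\be]$, choose $v\in(\al,\be)\setminus\supp(\AbstractProb)$ and $\eps>0$ with $(v-\eps,v+\eps)\subseteq(\al,\be)$ and $\AbstractProb(v-\eps,v+\eps)=0$. Then $F$ is constant on $(v-\eps,v+\eps)$, say with value $p$, and this $p$ lies strictly between $0$ and $1$ because $\al=\min\supp(\AbstractProb)$ and $\be=\max\supp(\AbstractProb)$ force $\AbstractProb$ to put positive mass on both $(-\infty,v-\eps/2)$ and $(v+\eps/2,\infty)$. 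From $F(v-\eps/2)=p$ one gets $Q(p)\le v-\eps/2$, while $F(w)\le p$ for every $w<v+\eps$ yields $Q(p')\ge v+\eps$ for every $p'\in(p,1)$; hence $Q(p^+)\ge v+\eps>v-\eps/2\ge Q(p)$, so $Q$ is discontinuous at $p$.

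The step I expect to be the main obstacle is $(v)\Rightarrow(i)$, where a mere gap in the support has to be turned into a genuine jump of $Q$. The delicate points there are the verification that the plateau value $p$ of $F$ over the gap is strictly between $0$ and $1$ (this is precisely where the hypotheses $\al=\min\supp(\AbstractProb)$ and $\be=\max\supp(\AbstractProb)$ are used), and the observation that $F(w)\le p$ for \emph{all} $w<v+\eps$, not merely for $w$ in the gap, which is what pins down $Q(p^+)$. The other implications are routine once one invokes that the infimum in the definition of $Q$ is attained and that a monotone function on $[0,1]$ with an interval as range is continuous. An alternative to $(iv)\Rightarrow(v)\Rightarrow(i)$ would use Proposition~\ref{distrQ}, since it says that the push-forward of Lebesgue measure on $[0,1]$ under $Q$ is $\AbstractProb$, but the direct arguments above appear to be the shortest.
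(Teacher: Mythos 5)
Your argument is correct, including the two delicate spots you flag: the treatment of $v=\al$ in (ii)$\Rightarrow$(iii) via the convention $Q(0)=\al$, and the verification in (v)$\Rightarrow$(i) that the plateau value $p=F(v-\eps/2)$ lies in $(0,1)$ (which is just Proposition~\ref{prop:FQextreme}(c)) and that $F(w)\le p$ for \emph{all} $w<v+\eps$, so that $Q(p^+)\ge v+\eps$. Note that the paper itself does not prove Proposition~\ref{critcont} but refers to \cite{BBGM2015maximum}; your self-contained cyclic chain $(i)\Rightarrow(ii)\Rightarrow(iii)\Rightarrow(iv)\Rightarrow(v)\Rightarrow(i)$ is the natural route and uses only the facts recorded in Propositions~\ref{prop:FQextreme} and~\ref{prop:FQ} together with the attainment of the infimum in the definition of $Q$, so it can stand in place of the external reference.
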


\begin{corollary}\label{Qunicont}
Let $\AbstractProb$ be a Borel probability measure on $\bR$
such that $\supp(\AbstractProb)$ is bounded and connected.
Then $F$ is strictly increasing and $Q$ is uniformly continuous on $[0,1]$.
\end{corollary}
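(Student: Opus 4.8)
The plan is to obtain the corollary as an almost immediate consequence of Proposition~\ref{critcont}, the only genuinely analytic ingredient being the fact that a continuous function on a compact interval is uniformly continuous.

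First I would note that the hypotheses put us squarely in the setting of Section~\ref{sec:qu}: since $\supp(\AbstractProb)$ is assumed bounded and is always closed, it is compact, so the numbers $\al,\be$ of \eqref{eq:minmaxsupp} are defined and $Q$ extends by continuity to $[0,1]$ as explained after Proposition~\ref{prop:FQextreme}. The assumption that $\supp(\AbstractProb)$ is connected says precisely that $\supp(\AbstractProb)=[\al,\be]$, which is condition~(i) of Proposition~\ref{critcont}. Invoking the implication (i)$\Rightarrow$(ii) of that proposition, $F$ is strictly increasing on $[\al,\be]$; combined with parts (a) and (d) of Proposition~\ref{prop:FQextreme}, which locate the regions where $F$ equals $0$ and $1$, this yields the asserted monotonicity of $F$.

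Next I would apply the implication (i)$\Rightarrow$(v) of Proposition~\ref{critcont} to conclude that $Q$ is continuous on the closed interval $[0,1]$. Since $[0,1]$ is compact, the Heine--Cantor theorem upgrades this continuity to uniform continuity, which is exactly the remaining claim. I do not anticipate any real obstacle: the corollary merely repackages Proposition~\ref{critcont}, and the passage from continuity to uniform continuity is the standard compactness argument.
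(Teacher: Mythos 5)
Your proposal is correct and matches the paper's intended derivation: the corollary is stated as an immediate consequence of Proposition~\ref{critcont} (implications (i)$\Rightarrow$(ii) and (i)$\Rightarrow$(v)), with uniform continuity following from continuity on the compact interval $[0,1]$ via Heine--Cantor. The paper gives no further argument, so there is nothing to add.
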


Here is a result concerning Riemann integrable functions. It is one of the basic ingredients to the proof of Theorem~\ref{thm:Riemann}.
It was proved in \cite{BBGM2015maximum} in slightly different notation.

\begin{proposition}\label{prop:RM}
Let $\Domain \subset \bR$ be a bounded interval, $\ProbInDomain$ be the normalized Lebesgue measure on $\Domain$,
and $\LimFunction\colon\Domain\to\bR$ be a Riemann integrable function with connected essential range.
For every $n\in\bN$, let $\xi^{(n)}_1,\ldots,\xi^{(n)}_{\tuplesize(n)}$,
$v^{(n)}_1,\ldots,v^{(n)}_{\tuplesize(n)}$, and $\si$, be as in Theorem~\ref{thm:Riemann}.
Then
\begin{equation}
\lim_{n\to\nf}\max_{1\le j\le \tuplesize(n)}\left|v^{(n)}_{\si_n(j)} - Q_\LimFunction(j/\tuplesize(n))\right|=0. \label{eq:RM}
\end{equation}
\end{proposition}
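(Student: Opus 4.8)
I would deduce the claim from Theorem~\ref{thm:measures}. Put $\LimProb\eqdef\LimFunction_*\ProbInDomain$ for the push-forward of $\ProbInDomain$ under $\LimFunction$; then $F_\LimProb=F_\LimFunction$, $Q_\LimProb=Q_\LimFunction$, and $\supp(\LimProb)=\Range(\LimFunction)$, which by hypothesis (and because a Riemann integrable function is bounded) is a bounded segment $[\al,\be]$, so $Q_\LimFunction$ is uniformly continuous on $[0,1]$ by Corollary~\ref{Qunicont}. For each $n$ I would form the empirical measure $\mu_n\eqdef\frac{1}{\tuplesize(n)}\sum_{j=1}^{\tuplesize(n)}\delta_{v^{(n)}_j}$, with quantile function $Q_{\mu_n}$. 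Reading off the definitions in \eqref{eq:defFQ}, once the sample values are put in ascending order by $\si_n$, one has $Q_{\mu_n}(j/\tuplesize(n))=v^{(n)}_{\si_n(j)}$ for $j=1,\dots,\tuplesize(n)$; consequently
\[
\max_{1\le j\le\tuplesize(n)}\bigl|v^{(n)}_{\si_n(j)}-Q_\LimFunction(j/\tuplesize(n))\bigr|
=\max_{1\le j\le\tuplesize(n)}\bigl|Q_{\mu_n}(j/\tuplesize(n))-Q_\LimFunction(j/\tuplesize(n))\bigr|
\le\sup_{p\in[0,1]}\bigl|Q_{\mu_n}(p)-Q_\LimFunction(p)\bigr|,
\]
so it would suffice to prove that $Q_{\mu_n}$ converges to $Q_\LimFunction$ uniformly on $[0,1]$.

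To apply Theorem~\ref{thm:measures} to $\LimProb$ and $(\mu_n)$ I must verify that $\mu_n\to\LimProb$ in distribution, and this is exactly the convergence of Riemann sums. For $\ph\in C_b(\bR)$ (equivalently $\ph\in C_c(\bR)$, since $\supp(\LimProb)$ is compact),
\[
\int_\bR\ph\,d\mu_n=\frac{1}{\tuplesize(n)}\sum_{j=1}^{\tuplesize(n)}\ph\bigl(\LimFunction(\xi^{(n)}_j)\bigr)
\]
is precisely the Riemann sum of $\ph\circ\LimFunction$ over the canonical $\tuplesize(n)$-partition of $\Domain$ with the tags $\xi^{(n)}_j$. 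Since $\ph$ is continuous and bounded and $\LimFunction$ is Riemann integrable, $\ph\circ\LimFunction$ is Riemann integrable on $\Domain$, so these sums tend to $\frac{1}{\LebesgueMeasure(\Domain)}\int_\Domain\ph\circ\LimFunction\,d\LebesgueMeasure=\int_\Domain\ph\circ\LimFunction\,d\ProbInDomain=\int_\bR\ph\,d\LimProb$, which is \eqref{eq:probconvdistr} for $(\mu_n)$.

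The remaining hypothesis of Theorem~\ref{thm:measures} is the inclusion $\supp(\mu_n)\subseteq\supp(\LimProb)$, i.e.\ that every sample value $v^{(n)}_j=\LimFunction(\xi^{(n)}_j)$ lies in $\Range(\LimFunction)=[\al,\be]$. This is the step where the assumptions on $\LimFunction$ do their work: a point $t$ at which $\LimFunction$ is continuous is necessarily mapped into $\Range(\LimFunction)$, for if $\LimFunction(t)\notin\Range(\LimFunction)$ then a whole neighbourhood of $t$ would be carried into the complement of $\Range(\LimFunction)$ and would therefore have $\ProbInDomain$-null preimage, which is impossible for a nondegenerate subinterval of $\Domain$; and since $\LimFunction$ is Riemann integrable, its set of discontinuities is $\ProbInDomain$-null, so $\LimFunction$ leaves $[\al,\be]$ at most on that null set. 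Turning this into the required inclusion is where I expect the main obstacle to lie: one has to argue that the tags $\xi^{(n)}_j$ may be taken in the full-measure continuity set, or else replace each $v^{(n)}_j$ by its metric projection onto $[\al,\be]$ — a monotone operation, so the ascending ordering is unaffected, at the cost of a perturbation bounded by $\max_{1\le j\le\tuplesize(n)}\dist(v^{(n)}_j,[\al,\be])$. Once the support inclusion is in hand, Theorem~\ref{thm:measures} yields $\sup_{p\in[0,1]}|Q_{\mu_n}(p)-Q_\LimFunction(p)|\to 0$, and the display in the first paragraph finishes the proof.

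In short, the argument has four moves: read off that the $j$-th ordered sample value equals $Q_{\mu_n}(j/\tuplesize(n))$; recognise the integrals against $\mu_n$ as Riemann sums of $\ph\circ\LimFunction$, hence get convergence in distribution; check the support inclusion $\supp(\mu_n)\subseteq\supp(\LimProb)$ (the only delicate point, where both connectedness of $\Range(\LimFunction)$ and a.e.\ continuity of $\LimFunction$ are used); and combine Theorem~\ref{thm:measures} with the elementary bound $\max_j|\cdot|\le\sup_p|\cdot|$.
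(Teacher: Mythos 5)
Your reduction to Theorem~\ref{thm:measures} --- the empirical measures $\mu_n=\frac{1}{\tuplesize(n)}\sum_j\delta_{v^{(n)}_j}$, the identity $Q_{\mu_n}(j/\tuplesize(n))=v^{(n)}_{\si_n(j)}$, and convergence in distribution obtained by recognising $\int\ph\,d\mu_n$ as a Riemann sum of $\ph\circ\LimFunction$ --- is sound and exactly parallels the paper's own proof of Theorem~\ref{thm:tuples}. (Note that the paper does not prove Proposition~\ref{prop:RM} at all; it defers to \cite{BBGM2015maximum}.) However, the step you yourself flag as the main obstacle, the inclusion $\supp(\mu_n)\subseteq\supp(\LimProb)$, is a genuine gap, and neither of your two proposed patches closes it. You cannot ``take the tags in the full-measure continuity set'': the points $\xi^{(n)}_j$ are \emph{given} in the statement as arbitrary points of the partition cells, not chosen by you. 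And the projection error $\max_j\dist(v^{(n)}_j,[\al,\be])$ need not tend to zero: take $\Domain=[0,1]$, $\LimFunction(t)=0$ for $t\ne 1/2$ and $\LimFunction(1/2)=100$. This $\LimFunction$ is Riemann integrable with essential range $\{0\}$, so $Q_\LimFunction\equiv 0$, yet for every $n$ the cell containing $1/2$ admits the tag $1/2$, producing a sample value $100$ and hence $\max_j\bigl|v^{(n)}_{\si_n(j)}-Q_\LimFunction(j/\tuplesize(n))\bigr|\ge 100$.

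This example shows more than that your argument is incomplete: under the literal reading of ``any points belonging to the different parts,'' the conclusion \eqref{eq:RM} itself fails, so no argument can close the gap without an extra ingredient controlling the values of $\LimFunction$ off its essential range, i.e.\ at its discontinuity points. Your observation that $\LimFunction(t)\in\Range(\LimFunction)$ at every continuity point $t$ is the right tool, but it has to be converted into either a hypothesis on the tags (that they avoid the Lebesgue-null set of discontinuities) or a convention on exceptional values; that is presumably what the precise formulation in \cite{BBGM2015maximum} supplies. As written, your argument proves the proposition only under the additional assumption that every $v^{(n)}_j$ lies in $\Range(\LimFunction)$, and that assumption is doing real work.
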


\section{Proofs of the main results}
\label{sec:proofs}

The following proposition is a special version of Alexandroff's criterion for convergence in distribution,
which is also known as the portmanteau lemma
(see, for example, Sections~8.1 and 8.2 of~\cite{Bogachev2007} or
Lemma~2.2 and Lemma~21.2 of~\cite{Vaart}).
Proofs of the equivalence (i)$\Leftrightarrow$(ii) can be found in
Sections~8.1 and 8.2 of~\cite{Bogachev2007} or Lemma~2.2 of~\cite{Vaart}.
We remark that this equivalence was established by A.~D.~Alexandroff
in the more abstract context of metric spaces.
The equivalence (ii)$\Leftrightarrow$(iii) is elementary:
see Lemma~21.2 of \cite{Vaart}.

\begin{proposition}\label{prop:critconvdistr}
Let $\LimProb$ be a Borel probability measure on $\bR$
and $(\prob_n)_{n=1}^\nf$ be a sequence of Borel probability measures on $\bR$.
Then the following conditions are equivalent.

{\rm(i)} For every $\ph\in C_b(\bR)$, \eqref{eq:probconvdistr} holds.

{\rm(ii)} $\displaystyle\lim_{n\to\nf} F_{\prob_n}(v) = F_{\LimProb}(v)$
for every point $v\in\bR$ at which $F_{\LimProb}$ is continuous.

{\rm(iii)} $\displaystyle\lim_{n\to\nf} Q_{\prob_n}(p) = Q_{\LimProb}(p)$
for every point $p\in(0,1)$ at which $Q_{\LimProb}$ is continuous.
\end{proposition}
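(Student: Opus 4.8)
The plan is to take the equivalence (i)$\Leftrightarrow$(ii) for granted --- it is precisely Alexandroff's portmanteau theorem, for which the quoted references suffice --- and to close the cycle by proving (ii)$\Rightarrow$(iii) and (iii)$\Rightarrow$(i). The only tool needed is the elementary $F$--$Q$ duality, valid for \emph{every} Borel probability measure $\mu$ on $\bR$, that $Q_\mu(u)\le v$ if and only if $u\le F_\mu(v)$ for all $u\in(0,1)$ and $v\in\bR$ (this is Proposition~\ref{prop:FQ}(c); its proof rests only on the right-continuity and the limits of $F_\mu$ and does not require a bounded support, and the same observations show that $Q_\mu(u)$ is a finite real number whenever $u\in(0,1)$). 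The second ingredient is the familiar fact that a monotone function has at most countably many discontinuities, so that the continuity points of $F_\LimProb$ are dense in $\bR$ while the discontinuities of $Q_\LimProb$ form a Lebesgue-null subset of $(0,1)$.

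For (ii)$\Rightarrow$(iii), fix a continuity point $p\in(0,1)$ of $Q\eqdef Q_\LimProb$ and an arbitrary $\eta>0$; the aim is to show $Q(p)-\eta<Q_{\prob_n}(p)<Q(p)+\eta$ for all large $n$. The lower estimate is the easy half: since $Q(p)=\inf\{w\colon F_\LimProb(w)\ge p\}$, every $v<Q(p)$ satisfies $F_\LimProb(v)<p$, so choosing a continuity point $v_-$ of $F_\LimProb$ with $Q(p)-\eta<v_-<Q(p)$, hypothesis (ii) gives $F_{\prob_n}(v_-)\to F_\LimProb(v_-)<p$, whence $F_{\prob_n}(v_-)<p$ for large $n$, and duality turns this into $Q_{\prob_n}(p)>v_->Q(p)-\eta$. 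For the upper estimate one exploits that continuity of $Q$ at $p$ forbids $F_\LimProb$ from staying $\le p$ on an entire right neighbourhood of $Q(p)$: if $F_\LimProb(w)\le p$ for all $w\in(Q(p),Q(p)+\eta)$, then by duality $Q(u)\ge Q(p)+\eta$ for every $u\in(p,1)$, so $Q(p^+)\ge Q(p)+\eta$, contradicting $Q(p^+)=Q(p)$. Hence some $w_0\in(Q(p),Q(p)+\eta)$ has $F_\LimProb(w_0)>p$, and a continuity point $v_+$ of $F_\LimProb$ in $(w_0,Q(p)+\eta)$ then satisfies $F_\LimProb(v_+)>p$; (ii) gives $F_{\prob_n}(v_+)>p$ for large $n$, so $Q_{\prob_n}(p)\le v_+<Q(p)+\eta$. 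Letting $\eta\downarrow 0$ yields (iii).

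For (iii)$\Rightarrow$(i), I would use the representation of a probability measure through its quantile function. By (iii) and the monotonicity of $Q_\LimProb$, the convergence $Q_{\prob_n}(u)\to Q_\LimProb(u)$ holds for all $u\in(0,1)$ outside a countable set, hence Lebesgue-almost everywhere on $(0,1)$. Moreover, for any Borel probability measure $\mu$ and any $v\in\bR$, duality gives $\{u\in(0,1)\colon Q_\mu(u)\le v\}=\{u\in(0,1)\colon u\le F_\mu(v)\}$, a set of Lebesgue measure $F_\mu(v)$; thus $Q_\mu$, regarded on $(0,1)$ equipped with Lebesgue measure, has distribution function $F_\mu$, and the image-measure formula gives $\int_0^1\ph(Q_\mu(u))\,du=\int_\bR\ph\,d\mu$ for every $\ph\in C_b(\bR)$ (this is Proposition~\ref{distrQ} in its general form). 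Combining this with the almost-everywhere convergence $\ph(Q_{\prob_n}(u))\to\ph(Q_\LimProb(u))$ and the uniform bound $|\ph(Q_{\prob_n}(u))|\le\|\ph\|_\nf$, dominated convergence yields
\[
\int_\bR\ph\,d\prob_n=\int_0^1\ph(Q_{\prob_n}(u))\,du\longrightarrow\int_0^1\ph(Q_\LimProb(u))\,du=\int_\bR\ph\,d\LimProb,
\]
which is (i). Together with the classical (i)$\Leftrightarrow$(ii), this makes all three conditions equivalent.

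The step I expect to be the genuine (if modest) obstacle is the upper estimate in (ii)$\Rightarrow$(iii): the two halves of the sandwich are asymmetric, the lower one being immediate from $Q$ being an infimum, whereas the upper one really uses the continuity of $Q_\LimProb$ at $p$ to rule out a flat stretch of $F_\LimProb$ immediately to the right of $Q_\LimProb(p)$. One must also take care that the $F$--$Q$ duality and the finiteness of $Q_\LimProb$ on $(0,1)$ are invoked \emph{without} the bounded-support standing assumption of Section~\ref{sec:qu}; this is harmless but deserves an explicit word.
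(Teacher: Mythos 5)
Your proof is correct, and it is worth noting at the outset that the paper itself does not prove Proposition~\ref{prop:critconvdistr}: it records the statement as a special case of Alexandroff's portmanteau theorem and refers to Bogachev and to van der Vaart for (i)$\Leftrightarrow$(ii) and to Lemma~21.2 of van der Vaart for (ii)$\Leftrightarrow$(iii). So your argument supplies details the paper outsources rather than competing with an in-text proof. Your cycle (i)$\Leftrightarrow$(ii) quoted, (ii)$\Rightarrow$(iii) proved, (iii)$\Rightarrow$(i) proved, is sound. The sandwich argument for (ii)$\Rightarrow$(iii) is in substance the standard one: the lower bound needs only that $Q_\LimProb(p)$ is an infimum, the upper bound correctly uses continuity of $Q_\LimProb$ at $p$ to rule out $F_\LimProb\le p$ on a right neighbourhood of $Q_\LimProb(p)$ and thus to produce $v_+\in\bigl(Q_\LimProb(p),Q_\LimProb(p)+\eta\bigr)$ with $F_\LimProb(v_+)>p$, and the density of the continuity points of the monotone function $F_\LimProb$ lets you apply (ii) at $v_\pm$. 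Your return route (iii)$\Rightarrow$(i) via the quantile representation $\int_0^1\ph\circ Q_{\prob}\,du=\int_\bR\ph\,d\prob$ together with almost-everywhere convergence and dominated convergence is a clean shortcut that closes the cycle without re-deriving (ii); it is essentially the Skorokhod-representation proof of weak convergence. The one point that genuinely needs the explicit word you promise is that Proposition~\ref{prop:FQ}(c) and Proposition~\ref{distrQ} are stated in Section~\ref{sec:qu} under the standing assumption that $\supp(\AbstractProb)$ is bounded, whereas here they must be invoked for arbitrary Borel probability measures; both do hold in general (the duality uses only right-continuity of $F_{\prob}$ and the limits $F_{\prob}(-\nf)=0$, $F_{\prob}(+\nf)=1$, which also guarantee that $Q_{\prob}$ is finite on $(0,1)$), but within the paper's logical architecture they would have to be restated without the bounded-support hypothesis.
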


The next result says that pointwise convergence on a segment,
jointly with monotonicity and continuity, imply uniform convergence.

\begin{proposition}\label{prop:monotone_implies_uniform}
Let $g\colon[0,1]\to\bR$ be a continuous function
and $(f_n)_{n=1}^\nf$ be a sequence of functions $[0,1]\to\bR$ such that
the function $f_n$ is increasing {\rm (}in the non-strict sense{\rm )} for every $n \in \bN$
and
$f_n(p)\to g(p)$ as $n \to \infty$ for every $p\in[0,1]$.
Then
\[
\lim_{n\to\nf}\sup_{p\in[0,1]}|f_n(p)-g(p)|=0.
\]
\end{proposition}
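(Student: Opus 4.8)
The plan is to run a Dini/P\'olya-type argument: reduce the uniform estimate to an estimate at finitely many points by exploiting the monotonicity of the $f_n$ together with the continuity (hence uniform continuity) of $g$ on the compact interval $[0,1]$.

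First I would note that $g$ is itself increasing: if $0\le p\le q\le 1$, then $f_n(p)\le f_n(q)$ for every $n$, and letting $n\to\nf$ gives $g(p)\le g(q)$. Since $g$ is continuous on $[0,1]$, it is uniformly continuous there, so, fixing $\eps>0$, I can choose a partition $0=p_0<p_1<\dots<p_m=1$ fine enough that $g(p_k)-g(p_{k-1})<\eps$ for every $k=1,\dots,m$. (This is the one place where compactness of $[0,1]$ and continuity of $g$ are actually used; monotonicity of $g$ makes these increments nonnegative, which is convenient for the sandwich below.)

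Next, using pointwise convergence only at the finitely many nodes $p_0,\dots,p_m$, I would pick $N\in\bN$ with $|f_n(p_k)-g(p_k)|<\eps$ for all $n\ge N$ and all $k$. Given an arbitrary $p\in[0,1]$, let $k$ be such that $p_{k-1}\le p\le p_k$. Monotonicity of $f_n$ gives $f_n(p_{k-1})\le f_n(p)\le f_n(p_k)$, and monotonicity of $g$ gives $g(p_{k-1})\le g(p)\le g(p_k)$. Combining these with the node estimates and with $g(p_k)-g(p_{k-1})<\eps$ yields, for $n\ge N$,
\[
g(p)-2\eps < g(p_{k-1})-\eps < f_n(p_{k-1})\le f_n(p)\le f_n(p_k) < g(p_k)+\eps < g(p)+2\eps,
\]
so $|f_n(p)-g(p)|<2\eps$. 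As $p$ was arbitrary, $\sup_{p\in[0,1]}|f_n(p)-g(p)|\le 2\eps$ for all $n\ge N$, which gives the claim.

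I do not expect any real obstacle; the only steps that need a little care are producing the partition on which the increments of $g$ are small and setting up the two-sided sandwich correctly (bounding $f_n(p)$ from below by its value at the left node $p_{k-1}$ and from above by its value at the right node $p_k$). One could even dispense with the observation that $g$ is increasing and argue purely from the uniform continuity of $g$, at the cost of marginally worse constants.
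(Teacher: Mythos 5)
Your proof is correct and follows essentially the same route as the paper: a finite partition of $[0,1]$ chosen via the uniform continuity of $g$, pointwise convergence at the nodes, and a two-sided sandwich using the monotonicity of $f_n$. The only cosmetic difference is that the paper uses an equidistant partition and compares $g(p)$ directly with $g$ at the neighboring nodes via uniform continuity, whereas you first observe that $g$ is increasing; as you note yourself, that observation is dispensable.
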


\begin{proof}
Let $\eps>0$.
Since $g$ is uniformly continuous on $[0,1]$,
we can select a positive number $\de$ such that
\begin{equation}\label{guni}
|g(p_1)-g(p_2)|\le\frac{\eps}{2}\;\:\mbox{whenever}\;\:p_1,p_2\in[0,1]\;\:\mbox{and}\;\:|p_1-p_2|\le\de.
\end{equation}
Choose $m\in\{1,2,\ldots\}$ such that $1/m\le\de$.
Using the convergence $f_n(p)\to g(p)$ at the points $p=j/m$, $j=0,1,\ldots,m$, we
find an $n_0\in\bN$ such that, for every $n\ge n_0$ and every $j\in\{0,\ldots,m\}$,
\begin{equation}\label{fapproxg}
\left|f_n\left(\frac{j}{m}\right)-g\left(\frac{j}{m}\right)\right|<\frac{\eps}{2}.
\end{equation}
Now let $n\ge n_0$ and $p\in[0,1]$.
Pick $j\in\{0,\ldots,m-1\}$ such that $j/m\le p\le (j+1)/m$.
Then the monotonicity of $f_n$ together with the inequalities \eqref{fapproxg} and \eqref{guni} implies that
\[
f_n(p)\le f_n\left(\frac{j+1}{m}\right) < g\left(\frac{j+1}{m}\right) + \frac{\eps}{2} < g(p)+\eps.
\]
In a similar manner,
\[
f_n(p)\ge f_n\left(\frac{j}{m}\right) > g\left(\frac{j}{m}\right)-\frac{\eps}{2} > g(p)-\eps,
\]
which completes the proof.
\end{proof}

\vsg
\begin{proof}[Proof of Theorem~\ref{thm:measures}.]
By Corollary~\ref{Qunicont}, $Q_{\LimProb}$ is uniformly continuous on $[0,1]$.
Therefore, by Proposition~\ref{prop:critconvdistr},
$Q_{\prob_n}$ pointwisely converges to $Q_{\LimProb}$ on $(0,1)$. We are so left with the points $0$ and $1$.

\vsk
We consider the situation at the point $0$. This is the place where
the assumption that $\supp(\prob_n)\subseteq \supp(\LimProb)$ makes its debut.
It implies that
\[
Q_{\prob_n}(0)=\inf\supp(\prob_n)\ge \inf\supp(\LimProb) = Q_{\LimProb}(0).
\]
Then, given $\eps>0$, there is a $\de>0$ such that $Q_{\LimProb}(\de)<Q_{\LimProb}(0)+\eps/2$
and an $n_0\in\bN$ such that $|Q_{\prob_n}(\de)-Q_{\LimProb}(\de)|<\eps/2$ for every $n\ge n_0$.
Consequently, for every $n\ge n_0$,
\[
Q_{\LimProb}(0)\le Q_{\prob_n}(0)\le Q_{\prob_n}(\de)<Q_{\LimProb}(\de)+\eps<Q_{\LimProb}(0)+\eps,
\]
whence $Q_{\prob_n}(0)\to Q_{\LimProb}(0)$.
The convergence at the point $1$ can be proved in a similar manner.
Thus, $Q_{\prob_n}$ pointwisely converges to $Q_{\LimProb}$ on $[0,1]$.
Proposition~\ref{prop:monotone_implies_uniform} now implies that the convergence is uniform.
\end{proof}

\vsg
\begin{proof}[Proof of Theorem~\ref{thm:tuples}]
We simply translate Theorem~\ref{thm:tuples}
into the language of Theorem~\ref{thm:measures}.

\vsk
First step.
For each $n\in\bN$, we denote by $\prob_n$ the normalized counting measure associated to the tuple
$(\al^{(n)}_1,\ldots,\al^{(n)}_{\tuplesize(n)})$, i.e., for every Borel subset $B$ of $\bR$, we put
\[
\prob_n(B) = \frac{\#\{j\in\{1,\ldots,\tuplesize(n)\}\colon\ \al^{(n)}_j\in B\}}{\tuplesize(n)}.
\]
In other words, $\prob_n$ is nothing but the arithmetic mean of the Dirac measures
concentrated at the points $\al^{(n)}_1,\ldots,\al^{(n)}_{\tuplesize(n)}$:
\[
\prob_n = \frac{1}{\tuplesize(n)}\sum_{j=1}^{\tuplesize(n)} \de_{\al^{(n)}_j}.
\]
Since the tuple $(\al^{(n)}_1,\ldots,\al^{(n)}_{\tuplesize(n)})$ is ordered,
\[
F_{\prob_n}(v) = \frac{\max \{j\in\{1,\ldots,\tuplesize(n)\}\colon\ \al^{(n)}_j \le v\}}{\tuplesize(n)}
\]
and
\begin{equation}\label{eq:Qal}
Q_{\prob_n}(j/\tuplesize(n)) = \al^{(n)}_j.
\end{equation}
Second step.
Denote by $\LimProb$ the pushforward measure on $\bR$ associated to $\ProbInDomain$ and $\LimFunction$,
i.e., for every Borel subset $B$ of $\bR$, put
\[
\LimProb(B) = \ProbInDomain(\LimFunction^{-1}(B)).
\]
Then $F_\LimFunction=F_\LimProb$, $Q_\LimFunction=Q_\LimProb$, and
\begin{align*}
\Range(\LimFunction)
&= \{v\in\bR\colon\ProbInDomain(\LimFunction^{-1}((v-\eps,v+\eps)))>0\quad\forall\eps>0\}
\\
&= \{v\in\bR\colon\LimProb(v-\eps,v+\eps)>0\quad\forall\eps>0\}
= \supp(\LimProb).
\end{align*}
Third step.
Since $\Range(\LimFunction)$ is bounded and the points $\al^{(n)}_j$ belong to $\Range(\LimFunction)$,
the limit relation \eqref{eq:def_asympt_distr} holds not only for every $\ph\in C_c(\bR)$,
but for every $\ph\in C_b(\bR)$,
i.e., $\prob_n$ converges to $\LimProb$ in distribution.
By Theorem~\ref{thm:measures}, $Q_{\prob_n}$ uniformly converges to $Q_{\LimFunction}$.
Using \eqref{eq:Qal} we conclude that
\begin{eqnarray*}
\max_{1\le j\le \tuplesize(n)} |\alpha^{(n)}_j-Q_\LimProb(j/\tuplesize(n))|
& = & \max_{1\le j\le \tuplesize(n)} |Q_{\prob_n}(j/\tuplesize(n))-Q_\LimProb(j/\tuplesize(n))|\\
& \le & \sup_{p\in[0,1]} |Q_{\prob_n}(p) - Q_\LimProb(p)|,
\end{eqnarray*}
which completes the proof of  \eqref{eq:convtuplesnodes}.
Finally, from the uniform continuity of $Q_{\LimProb}$ we obtain
\[
\lim_{n\to\nf} \max_{1\le j\le \tuplesize(n)}
\sup_{\frac{j-1}{\tuplesize(n)}\le u\le\frac{j}{\tuplesize(n)}} |Q_\LimProb(j/\tuplesize(n))-Q_\LimProb(u)| = 0,
\]
which jointly with \eqref{eq:convtuplesnodes} yields \eqref{eq:convtuplessup}.
\end{proof}

\vsg
\begin{proof}[Proof of Theorem~\ref{thm:Riemann}]
The assertion of this theorem is immediate from Theorem~\ref{thm:tuples} and Proposition~\ref{prop:RM}.
\end{proof}

\section{Applications to Toeplitz-like matrices}
\label{sec:applToeplitz}

Given a matrix $A\in\mathbb{C}^{\tuplesize\times \tuplesize}$,
we denote by $s_{1}(A),\ldots,s_{\tuplesize}(A)$ its singular values
written in the ascending order,
$
s_{1}(A)\le \dots \le s_{\tuplesize}(A),
$
and for a Hermitian matrix $A\in\mathbb{C}^{\tuplesize\times \tuplesize}$,
we let $\la_{1}(A),\ldots,\la_{\tuplesize}(A)$ stand for its eigenvalues
written in the ascending order, taking multiplicities into account,
$
\la_{1}(A)\le \dots \le \la_{\tuplesize}(A).
$
In accordance with the definition of asymptotic distribution given in Section~1,
we adopt the following terminology.
Let $(A_n)_{n=1}^\nf$ be a sequence of square complex matrices,
denote the order of $A_n$ by $\tuplesize(n)$, and suppose that $\tuplesize(n)\to\infty$ as $n\to\nf$.
Let $(\Domain,\cF,\ProbInDomain)$ be a probability space
and let $\LimFunction\colon \Domain\to\bR$ be an $\cF$-measurable function.
If the sequence of tuples $(s_{1}(A_n),\dots,s_{\tuplesize(n)}(A_n))_{n=1}^{\nf}$
is asymptotically distributed as $(\LimFunction,\ProbInDomain)$,
then we say that \emph{the singular values of the sequence $(A_n)_{n=1}^\nf$
are asymptotically distributed as $(\LimFunction,\ProbInDomain)$}.
A similar terminology is used for the eigenvalues.

\subsection*{\bf Multilevel Toeplitz matrices}

Let $a$ be a function in $L^\infty$ on $\bT^k$, where $\bT$ is the complex unit circle. The Fourier coefficients of
$a$ are defined by
\[a_{j_1,\ldots,j_k}=\frac{1}{(2\pi)^k}\int_0^{2\pi}\cdots\int_0^{2\pi}a(e^{i\tht_1},\ldots,e^{i\tht_k})e^{-i(j_1\tht_1+\cdots+j_k\tht_k)}
d\tht_1\cdots d\tht_k.\]
Suppose that for each $n \in \bN$ we are given a $k$-tuple $(m_1^{(n)}, \ldots,m_k^{(n)}) \in \bN^k$.
We denote by $W_n(a)$ the linear operator acting on
\[Y_n:=\ell^2(\{1,\ldots,m_1^{(n)}\} \times \cdots \times \{1,\ldots,m_k^{(n)}\})\]
by the rule
\[(W_n(a)x)_{i_1,\ldots,i_k}=\sum_{j_1=1}^{m_1^{(n)}}\cdots \sum_{j_k=1}^{m_k^{(n)}}
a_{i_1-j_1, \ldots, i_k-j_k}x_{j_1, \ldots, j_k},\]
and we let $T_n(a)$ stand for the matrix representation of $W_n(a)$ in the standard basis of $Y_n$.
The matrix $T_n(a)$ is called a $k$-level Toeplitz matrix. Note that in this case $d(n)=m_1^{(n)}\cdots m_k^{(n)}$.
Tyrtyshnikov~\cite{Tyrt1996} showed that
if \[\min(m_1^{(n)}, \ldots,m_k^{(n)}) \to \infty \;\:\mbox{as}\;\: n\to \infty,\] then the singular values of $T_n(a)$ are
asymptotically distributed as $X:=|a|$ on $\bT^k$ with normalized invariant measure. In~\cite{BBGM2015maximum},
we showed that if the essential range $\Range(|a|)$
is just the segment $[0, \|a\|_\infty]$,
then
\[\lim_{n \to \infty}\max_{1 \le j \le d(n)} |s_j(T_n(a))-Q_{|a|}(j/d(n))|=0.\]
Now this result can simply be deduced from Tyrtyshnikov's in conjunction with Theorem 1.2.

\subsection*{\bf Sums of products of Toeplitz matrices}

A $1$-level Toeplitz matrix is a usual Toeplitz matrix, that is, a matrix of the form $(a_{i-j})_{i,j=1}^n$.
If the entries $a_k$ ($k \in \bZ$) are  the Fourier coefficients of a function $a \in L^\infty(\bT)$,
then $(a_{i-j})_{i,j=1}^n$ is denoted by $T_n(a)$ and $a$ is referred to as the symbol of
the matrices $T_n(a)$ ($n \in \bN$).

\vsk
Denote by $\mu_\bT$ the normalized invariant measure on the unit circle $\bT$.
For every pair $(p,q)$ with $p\in\{1,\ldots,M\}$, $q\in\{1,\ldots,N_p\}$,
take functions $a^{(p,q)}\in L^\nf(\bT)$
and define $B_n\in\bC^{n\times n}$ by
\[
B_n = \sum_{p=1}^M \prod_{q=1}^{N_p} T_n(a^{(p,q)}).
\]
Then it is known from \cite{BS1999,Serra2003,Strouse,Tyrt1994,Tyrt2001}
that the singular values of $B_n$ are asymptotically distributed as $(\LimFunction,\mu_\bT)$, where
\[
\LimFunction := \left|\sum_{p=1}^M \prod_{q=1}^{N_p} a^{(p,q)}\right|.
\]
If $\Range(\LimFunction)$ is a segment $[0,\be]$
and $\|B_n\|\le\be$ for every $n\in\bN$,
then Theorem~\ref{thm:tuples} assures that
\begin{equation}\label{eq:sumproductapprox}
\lim_{n\to\nf} \max_{1\le j\le n} |s^{(n)}_{j}(B_n)-\LimFunction(j/n)| = 0.
\end{equation}

As an example, consider the products $B_n=T_n(a^{(1)})T_n(a^{(2)})$
of Toeplitz matrices with the symbols $a^{(1)}$ and $a^{(2)}$ shown in Figure~\ref{fig:product}.
In that case $\LimFunction=a^{(1)} a^{(2)}$,
the essential range of $\LimFunction$ is $[0,1]$,
and the norms of $B_n$ are bounded by $1$.
Therefore \eqref{eq:sumproductapprox} holds.
Note that $\Range(a^{(1)})$ and $\Range(a^{(2)})$ have gaps,
which implies that the singular values of $T(a^{(q)})$
cannot be approximated uniformly  by the values of $a^{(q)}$, $q=1,2$.
Denoting the maximum on the left-hand side of \eqref{eq:sumproductapprox} by $\eps^{(n)}$
we get the following table:
\[
\begin{array}{c|c|c|c|c|c|c}
\displaystyle\vphantom{0_{0_0}^{0^0}} n & 32 & 64 & 128 & 256 & 512 & 1024 \\\hline
\displaystyle\vphantom{0_{0_0}^{0^{0^0}}} \eps^{(n)} &
\ 5.7\cdot 10^{-2}\ &\ 3.1\cdot 10^{-2}\ &\ 1.6\cdot 10^{-2}\ &\ 8.6\cdot 10^{-6}\ &\ 4.4\cdot 10^{-3}\ &\ 2.3\cdot 10^{-3}\
\end{array}.
\]

\begin{figure}[ht]
\centering
\includegraphics{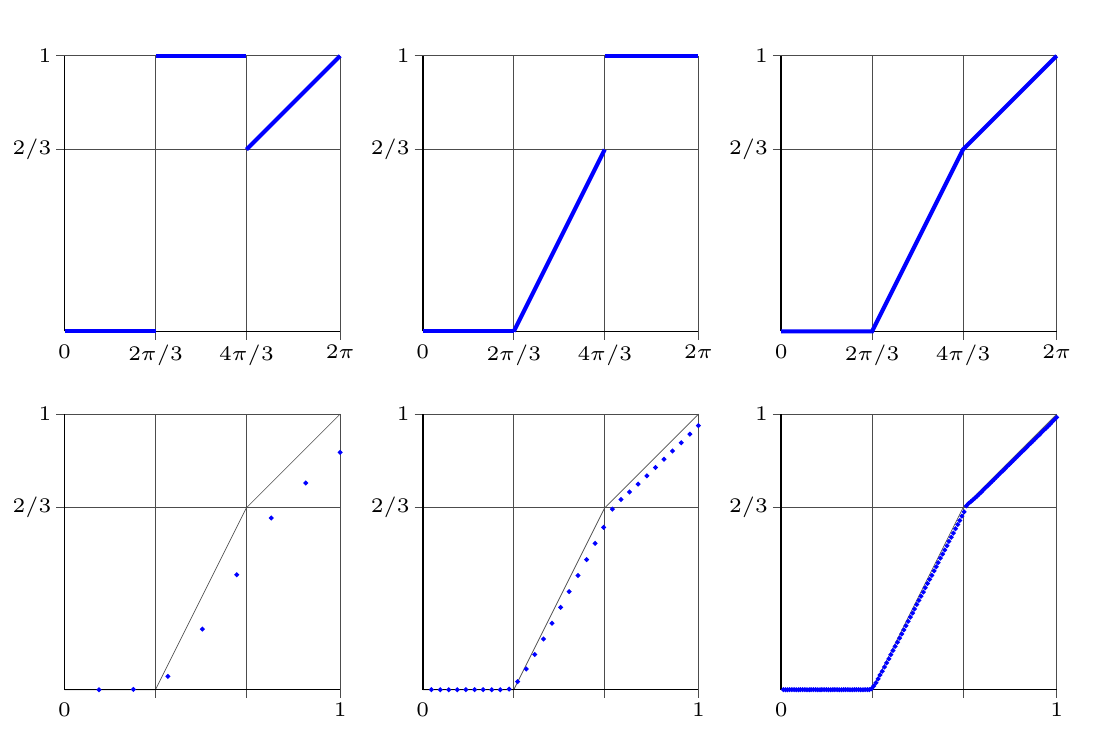}
\caption{The top row shows the symbols $a^{(1)}$, $a^{(2)}$, and $a^{(1)}a^{(2)}$, respectively.
The bottom row shows the singular values of $B_n$ for $n=8,32,128$
as points with coordinates $(j/n, s^{(n)}_j)$, and the quantile function of $a^{(1)}a^{(2)}$.}
\label{fig:product}
\end{figure}

\subsection*{\bf Other Toeplitz-like matrices}

The asymptotic distribution of singular and eigenvalues are known
for many other classes of matrices:
$g$-Toeplitz matrices \cite{ENSS2012,NSS2010},
locally Toeplitz matrices \cite{Serra2003,Tilli1998,ZS2004},
and also in the more general situation when some ``complicated'' matrices
can be approximated by ``simple'' matrices with known distribution of singular values or eigenvalues
(see \cite{GSV2015} or \cite[Theorem 2.1]{Tyrt1994}).

\vsk
In all these cases, the uniform convergence of the eigenvalues holds
under the assumption that the matrices $A_n$ are Hermitian, that the essential range of the function $\LimFunction$ is bounded and connected,
and that the eigenvalues of $A_n$ are contained in $\Range(\LimFunction)$.
For the singular values of (not necessarily Hermitian) matrices $A_n$, it is sufficient to require
that $\Range(\LimFunction)$ is a segment of the form $[0,\be]$
and that $\|A_n\|\le\be$ for every $n$.

\vsk
We remark that $Q_{\LimFunction}(j/d(n))$ is a very rough approximation
to individual singular values (or eigenvalues) because
the magnitude of the error is usually comparable with the distance between consecutive singular values
(or eigenvalues).
However, the quantile approach yields more precise approximations
once the sites $j/d(n)$ are substituted by more cleverly chosen points; see \cite{BBGM2015simpleloop}.
We will not embark on this subtle issue here.
Theorem~\ref{thm:Riemann} should nevertheless be of use for numerical methods
since it has the potential to provide us with an initial approximation for iterative algorithms.

\subsection*{\bf Nets instead of sequences}

Theorems~\ref{thm:measures}, \ref{thm:tuples}, \ref{thm:Riemann}
are also true with sequences replaced by nets.
We decided to restrict ourselves to sequences just for the sake of simplicity.
But here is a result by I.~B.~Simonenko \cite{Sim}
where nets are the appropriate language.

\vsk
Let $a: \bT^k \to \bR$ be a continuous function.
For a finite subset $M$ of $\bZ^k$, denote by $W_M(a)$ the linear operator defined
by
\[(W_M(a)x)_i=\sum_{j \in M} a_{i-j}x_j, \quad i \in M\]
on $\ell^2(M)$ and let $T_M(a)$ be the matrix representation of $W_M(a)$ in the standard basis
of $\ell^2(M)$. Now suppose $(M_\nu)_{\nu \in \cN}$ is any net
of finite subsets of $\bZ^k$ such that
\[
\lim_{\nu\in\cN} \min_{Y\subset M_\nu}
\max\left(\frac{\# (M_\nu \setminus Y)}{\# M_\nu}, \frac{1}{\dist(Y, \bZ\setminus M_\nu)}\right) = 0.
\]
Simonenko showed that then the eigenvalues $\la_j(T_{M_\nu}(a))$ of the Hermitian matrices $T_{M_\nu}(a)$ all belong to
$\Range(a)=[\min(a),\max(a)]$ and are asymptotically distributed as $X:=a$ on $\bT^k$ with
the measure $\mu_\bT\times \cdots \times \mu_\bT$. From Theorem~1.2 (for nets) we therefore conclude that
\[\lim_{\nu \in \cN} \max_{1 \le j \le \#M_\nu}|\la_j(T_{M_\nu}(a))-Q_a(j/\#M_\nu)|=0.\]

\section{Examples from beyond the matrix world}
\label{sec:applsequences}

\begin{example}\label{example:arcsin}
{\rm The purpose of this example is to turn inside out the
famous \emph{arcsine law} for random walks
discovered by P.~L\'{e}vy in 1939 (see \cite[Chapter~10]{Lesigne2005}).
What results after that procedure is the quantile version of the arcsine law, which might be called
the  \emph{sine law} for random walks.

\vsk
For every $n\in\bN$ and every $w=(w_1,\ldots,w_n)\in\{-1,1\}^n$, put
\[
G_n(w)=\frac{\#\{k\in\{1,\ldots,n\}\colon\ w_1+\ldots+w_k > 0\}}{n}.
\]
Let $\tuplesize(n)=2^n$ and $\al^{(n)}_1,\ldots,\al^{(n)}_{\tuplesize(n)}$ be the numbers
$G_n(w)$, $w\in\{-1,1\}^n$, written in the increasing order.
For example, if $n=3$, then we have $2^3=8$ elements $w$ with the following values of $G_3(w)$:
\begin{eqnarray*}
& &
G_3(\{-1,-1,-1\})=0, \quad
G_3(\{-1,-1,1\})=0, \quad
G_3(\{-1,1,-1\})=0, \\
& & G_3(\{-1,1,1\})=\frac{1}{3}, \quad
G_3(\{1,-1,-1\})=\frac{1}{3}, \quad
G_3(\{1,-1,1\})=\frac{2}{3}, \\
& & G_3(\{1,1,-1\})=1, \quad
G_3(\{1,1,1\})=1.
\end{eqnarray*}
The corresponding collection $\al^{(3)}=(\al^{(3)}_1,\ldots,\al^{(3)}_{8})$ is
\[
\left(0,0,0,\frac{1}{3},\frac{1}{3},\frac{2}{3},1,1\right).
\]
L\'{e}vy's arcsine law says that $\al^{(n)}=(\al^{(n)}_1,\ldots,\al^{(n)}_{\tuplesize(n)})$ is asymptotically distributed as $(\LimFunction,\ProbInDomain)$,
where $\LimFunction$ is defined on $[0,1]$ by
\[
\LimFunction(v) \eqdef \frac{2}{\pi}\arcsin\sqrt{v},
\]
and $\ProbInDomain$ is the Lebesgue measure on $[0,1]$.
Consequently,
\[
Q_{\LimFunction}(p)=\sin^2\frac{\pi p}{2},
\]
and by Theorem~\ref{thm:tuples},
\begin{equation}\label{eq:limsinlaw}
\lim_{n\to\nf}\max_{1\le j\le 2^n} \left|\al^{(n)}_j-\sin^2\frac{\pi j}{2^{n+1}}\right| = 0.
\end{equation}
Figure~\ref{fig:arcsin} shows the values $\al^{(n)}_j$ for $n=3$ and $n=30$.
We see that the convergence is very slow.
Denoting  the maximum on the left-hand side of \eqref{eq:limsinlaw} by $\eps^{(n)}$,
we get the following table:
\[
\begin{array}{c|c|c|c|c|c|c}
\displaystyle\vphantom{0_{0_0}^{0^0}} n & 5 & 10 & 15 & 20 & 25 & 30 \\\hline
\displaystyle\vphantom{0_{0_0}^{0^{0^0}}} \eps^{(n)} &\ 0.300\ &\ 0.209\ &\ 0.164\ &\ 0.144\ &\ 0.126\ &\  0.116\
\end{array}.
\]

\begin{figure}[ht]
\centering
\includegraphics{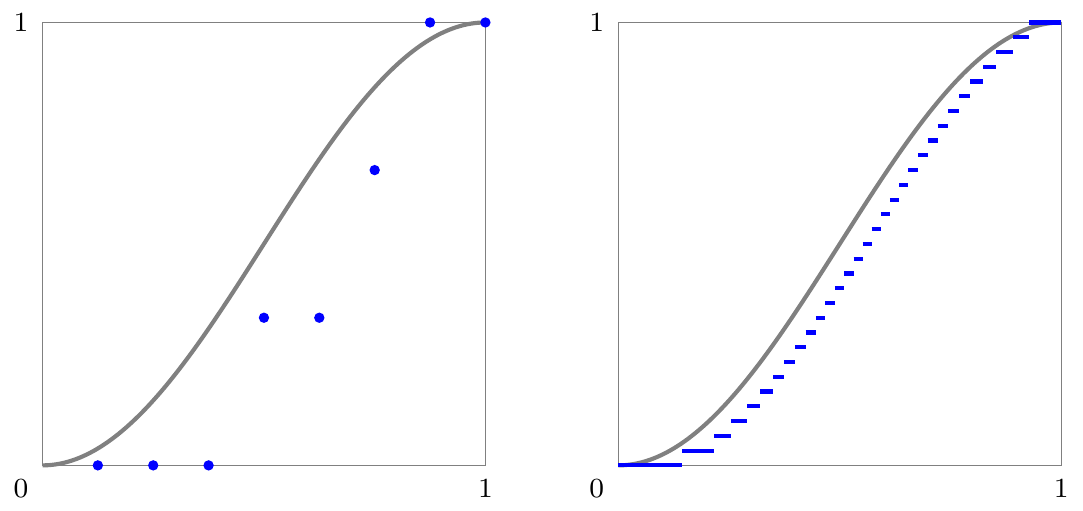}
\caption{The left picture shows the points $(j/2^n,\al^{(n)}_j)$ for $n=3$
and the plot of $Q_{\LimFunction}(v)$ from Example~\ref{example:arcsin}.
The right picture corresponds to $n=30$; we there glued together $2^{30}$ points to $31$ line segments.
}
\label{fig:arcsin}
\end{figure}

}
\end{example}

\vsg
The next result follows from Theorem~\ref{thm:measures}
and indicates another class of applications of that theorem,
namely, application to \emph{asymptotically distributed sequences of numbers}.

\begin{proposition}\label{prop:asympt_distr_seq}
Let $\LimProb$ be a Borel probability measure on $\bR$ with bounded connected support
and let $(\be_j)_{j=1}^\nf$ be a bounded sequence of real numbers which are
asymptotically distributed as $\LimProb$ in the sense that, for every $v\in\bR$,
\[
\lim_{n\to\nf}\frac{\#\{j\in\{1,\ldots,n\}\colon\ \be_j\le v\}}{n}=F_\LimProb(v).
\]
Let $(\al^{(n)}_1,\ldots,\al^{(n)}_n)$ denote
the collection $(\be_1,\ldots,\be_n)$ written in the ascending order.
Then
\[
\lim_{n\to\nf} \max_{1\le j\le n} |\al_j^{(n)} - Q_\LimProb(j/n)| = 0.
\]
\end{proposition}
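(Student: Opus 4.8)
The plan is to translate the statement into the framework of Theorem~\ref{thm:measures}, exactly as Theorem~\ref{thm:tuples} was derived from it. For each $n$ let $\prob_n=\frac1n\sum_{j=1}^{n}\de_{\be_j}$ be the normalized counting measure of the tuple $(\be_1,\dots,\be_n)$, so that $F_{\prob_n}(v)=\#\{j\le n\colon\be_j\le v\}/n$. The hypothesis says precisely that $F_{\prob_n}\to F_\LimProb$ pointwise on $\bR$, hence in particular at every continuity point of $F_\LimProb$; by Proposition~\ref{prop:critconvdistr}, (ii)$\Rightarrow$(i), this means $\prob_n\to\LimProb$ in distribution. Since $(\al^{(n)}_1,\dots,\al^{(n)}_n)$ is the ascending reordering of $(\be_1,\dots,\be_n)$, the bookkeeping in the first step of the proof of Theorem~\ref{thm:tuples} gives $Q_{\prob_n}(j/n)=\al^{(n)}_j$ for $j=1,\dots,n$, together with $Q_{\prob_n}(0)=\min_{j\le n}\be_j$ and $Q_{\prob_n}(1)=\max_{j\le n}\be_j$.

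With this dictionary the assertion is equivalent to $\sup_{p\in[0,1]}|Q_{\prob_n}(p)-Q_\LimProb(p)|\to0$, because
\[
\max_{1\le j\le n}|\al^{(n)}_j-Q_\LimProb(j/n)|=\max_{1\le j\le n}|Q_{\prob_n}(j/n)-Q_\LimProb(j/n)|\le\sup_{p\in[0,1]}|Q_{\prob_n}(p)-Q_\LimProb(p)|.
\]
Writing $\al=\min\supp(\LimProb)$ and $\be=\max\supp(\LimProb)$, one has $\supp(\prob_n)=\{\be_1,\dots,\be_n\}$, so the hypothesis that matches Theorem~\ref{thm:measures} is that all $\be_j$ lie in $[\al,\be]=\supp(\LimProb)$; this holds in the applications of interest (the $\be_j$ being themselves values arising from the support, as in the arcsine example), and under it the required uniform convergence is literally Theorem~\ref{thm:measures}. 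If instead one only assumes $(\be_j)$ bounded, one truncates each $\be_j$ to $[\al,\be]$: the truncated measures still converge to $\LimProb$ in distribution (their distribution functions agree with $F_{\prob_n}$ on $[\al,\be)$ and are $0$ on $(-\infty,\al)$ and $1$ on $[\be,+\nf)$), but Theorem~\ref{thm:measures} then yields only the conclusion with $\al^{(n)}_j$ replaced by its truncation, so a persistent excursion of the $\be_j$ below $\al$ or above $\be$ cannot be repaired.

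If one wishes to re-run the argument rather than cite Theorem~\ref{thm:measures}: by Corollary~\ref{Qunicont} the limit $Q_\LimProb$ is uniformly continuous on $[0,1]$, hence Proposition~\ref{prop:critconvdistr}, (i)$\Rightarrow$(iii), gives $Q_{\prob_n}(p)\to Q_\LimProb(p)$ for every $p\in(0,1)$; each $Q_{\prob_n}$ is nondecreasing on $[0,1]$, so by Proposition~\ref{prop:monotone_implies_uniform} it remains only to extend this convergence to $p=0$ and $p=1$. There $\supp(\prob_n)\subseteq[\al,\be]$ gives $Q_{\prob_n}(0)=\min_{j\le n}\be_j\ge\al=Q_\LimProb(0)$, while for each $\de>0$ one has $Q_{\prob_n}(0)\le Q_{\prob_n}(\de)\to Q_\LimProb(\de)$ and $Q_\LimProb(\de)\to\al$ as $\de\to0^+$, which closes the estimate from above; the point $p=1$ is symmetric. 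I expect this endpoint (equivalently, support-containment) bookkeeping, rather than the interior convergence, which is automatic, to be the main obstacle; it is also exactly the place where the boundedness and connectedness of $\supp(\LimProb)$ are genuinely used.
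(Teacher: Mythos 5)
Your reduction is exactly the one the paper intends: Proposition~\ref{prop:asympt_distr_seq} is stated without an explicit proof, as a consequence of Theorem~\ref{thm:measures} obtained by passing to the normalized counting measures $\prob_n=\frac{1}{n}\sum_{j=1}^{n}\de_{\be_j}$, invoking Proposition~\ref{prop:critconvdistr} and the identity $Q_{\prob_n}(j/n)=\al^{(n)}_j$; so your argument coincides with the paper's. Your side remark about the support hypothesis is also correct and worth emphasizing: Theorem~\ref{thm:measures} requires $\supp(\prob_n)\subseteq\supp(\LimProb)$, i.e.\ that every $\be_j$ lie in $\supp(\LimProb)$, and this cannot be weakened to mere boundedness. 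Indeed, if $\LimProb$ is the uniform measure on $[0,1]$, $\be_1=2$, and $\be_j$ for $j\ge 2$ is Weyl's sequence, then the displayed distributional hypothesis holds for every $v\in\bR$ (a single outlier contributes $1/n\to 0$), yet $\al^{(n)}_n=2$ for all $n$ while $Q_\LimProb(1)=1$, so the conclusion fails. Thus the proposition as printed tacitly assumes $\be_j\in\supp(\LimProb)$ (which holds in Example~\ref{Weyl}), and under that assumption your proof, in either the ``cite Theorem~\ref{thm:measures}'' form or the re-run of its endpoint argument, is complete.
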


In particular, for sequences which are \emph{uniformly distributed} on $[0,1]$ (see~\cite{KN1974}),
one has $Q(p)=p$ for every $p\in[0,1]$,
and hence Proposition~\ref{prop:asympt_distr_seq} implies that
\begin{equation}\label{eq:uniconverg_unidistr}
\lim_{n\to\infty} \max_{1\le j\le n} |\al_j^{(n)} - j/n| = 0.
\end{equation}

\begin{example}\label{Weyl}
{\rm
Consider the sequence $\be_j=j\sqrt{2}-\lfloor j\sqrt{2}\rfloor$.
By Weyl's equidistribution theorem, it is uniformly distributed on $[0,1]$.
Figure \ref{fig:Weyl} shows the points $(j/64,\al_j^{(64)})$ for $j=1,\ldots,64$.
Denoting the maximum on the left-hand side of \eqref{eq:uniconverg_unidistr} by $\eps^{(n)}$
we get the following table:
\[
\begin{array}{c|c|c|c|c|c|c}
\displaystyle\vphantom{0_{0_0}^{0^0}} n & 32 & 64 & 128 & 256 & 512 & 1024 \\\hline
\displaystyle\vphantom{0_{0_0}^{0^{0^0}}} \eps^{(n)} &
\ 4.6\cdot 10^{-2}\ &\ 2.6\cdot 10^{-2}\ &\ 1.1\cdot 10^{-2}\ &\ 5.6\cdot 10^{-2}\ &\ 3.3\cdot 10^{-3}\ &\  2.4\cdot 10^{-3}\
\end{array}.
\]
If fact, the behavior of $\eps^{(n)}$ is rather irregular,
but we obtained $\eps^{(n)} \le \dfrac{0.7\ln(n)}{n}$ for $n=2,3,\ldots,10000$.

\begin{figure}[ht]
\centering
\includegraphics{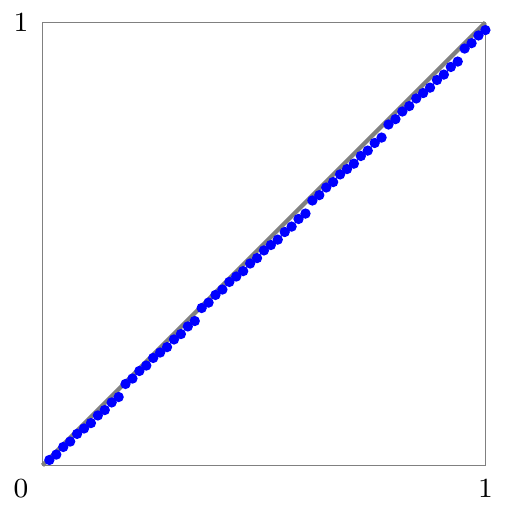}
\caption{The blue points are the points whose abscissas are $j/64$ and whose ordinates are the ordered numbers $j\sqrt{2}-\lfloor j\sqrt{2}\rfloor$ with $j=1,\ldots,64$.
The gray line is the graph of the identity function on $[0,1]$.}
\label{fig:Weyl}
\end{figure}

}
\end{example}

\end{document}